\newtheorem{theorem}{Theorem}[section]
\newtheorem{lemma}[theorem]{Lemma}
\newtheorem{proposition}[theorem]{Proposition}
\newtheorem{cor}[theorem]{Corollary}
\theoremstyle{definition}
\newtheorem{defn}[theorem]{Definition}
\theoremstyle{remark}
\newtheorem{remark}[theorem]{\bf{Remark}}
\numberwithin{equation}{section}
\begin{document}

	\title [Numerical radius of sectorial matrices] 
	{   {Numerical radius inequalities of sectorial matrices} }

	\author[P. Bhunia, K. Paul, A. Sen]{Pintu Bhunia,  Kallol Paul, Anirban Sen}
	
	\address{(Bhunia) Department of Mathematics, Jadavpur University, Kolkata 700 032, West  Bengal, India}
	\email{pintubhunia5206@gmail.com}
	\email{pbhunia.math.rs@jadavpuruniversity.in}
	
	\address{(Paul) Department of Mathematics, Jadavpur University, Kolkata 700 032, West      Bengal, India}
	\email{kalloldada@gmail.com }
	\email{kallol.paul@jadavpuruniversity.in}
	
	\address{(Sen) Department of Mathematics, Jadavpur University, Kolkata 700 032, West  Bengal, India}
	\email{anirbansenfulia@gmail.com}

	\thanks{ Pintu Bhunia would like to thank UGC, Govt. of India for the financial support in the form of Senior Research Fellowship under the mentorship of Prof. Kallol Paul. Anirban Sen would like to thank CSIR, Govt. of India for the financial support in the form of Junior Research Fellowship under the mentorship of Prof. Kallol Paul. }
	\thanks{}
	\thanks{}
	
	
	\subjclass[2020]{47A12, 47B44, 47A30}
	\keywords{Numerical radius; Numerical range, Accretive matrix;  Sectorial matrix}
	
	\maketitle

	\begin{abstract}
		We obtain  several upper and lower bounds for the numerical radius of sectorial matrices. We also develop several numerical radius inequalities of the sum, product and commutator of sectorial matrices. The inequalities obtained here are sharper than the existing related inequalities  for general matrices. Among many other results  we prove  that
		if  $A$ is an $n\times n$ complex matrix with the numerical range $W(A)$ satisfying
		$W(A)\subseteq\{re^{\pm i\theta}~:~\theta_1\leq\theta\leq\theta_2\},$
		where $r>0$ and $\theta_1,\theta_2\in \left[0,\pi/2\right],$ then 
		\begin{eqnarray*}
		&&(i)\,\,	w(A) \geq  \frac{csc\gamma}{2}\|A\|
			+ \frac{csc\gamma}{2}\left| \|\Im(A)\|-\|\Re(A)\|\right|,\,\,\text{and}\\
		&&(ii)\,\, w^2(A) \geq  \frac{csc^2\gamma}{4}\|AA^*+A^*A\|
			+ \frac{csc^2\gamma}{2}\left| \|\Im(A)\|^2-\|\Re(A)\|^2\right|,		\end{eqnarray*}
	where $\gamma=\max\{\theta_2,\pi/2-\theta_1\}$. We also prove that if $A,B$ are sectorial matrices with sectorial index $\gamma \in [0,\pi/2)$ and they are double commuting, then $w(AB)\leq \left(1+\sin^2\gamma\right)w(A)w(B).$
	
	
\end{abstract}

\section{{Introduction}}

\noindent Let  $\mathcal{M}_n$ denote the algebra of all $n\times n$ complex matrices. For $A\in  \mathcal{M}_n,$ the operator norm of $A$, denoted by $\|A\|$, is defined as $$\|A\|=\sup\{ \|Ax\| : x\in \mathbb{C}^n, \|x\|=1 \}.$$
  The numerical range and the numerical radius of $A$, denoted by $W(A)$ and $w(A)$ respectively,  are defined as  $$W(A)=\{ \langle Ax,x \rangle : x\in \mathbb{C}^n, \|x\|=1 \}$$ and $$w(A)=\sup\{ |\langle Ax,x \rangle| : x\in \mathbb{C}^n, \|x\|=1 \}.$$
  It is well known that the numerical radius $w(\cdot)$ defines a norm on $\mathcal{M}_n$, and $w(\cdot)$ is equivalent to the operator norm $\|\cdot\|.$ In fact, for each $ A\in \mathcal{M}_n,$ 
  \begin{eqnarray}\label{eqv}
  	\frac{1}{2}\|A\| \leq w(A)\leq \|A\| .
  \end{eqnarray}
  Note that $\frac{1}{2}\|A\| = w(A)$ if $A^2=0$, and $w(A)= \|A\|$ if $AA^*=A^*A$ (where $A^*$ denotes the adjoint of $A$).
 Computation of the exact value of the numerical radius $w(A)$ for  an arbitrary matrix $A$ is not an easy task, except for some special class of matrices. Therefore, for arbitrary matrices $A \in \mathcal{M}_n$, the researchers tried to develop upper and lower bounds of $w(A)$, which are sharper than the  bounds in \eqref{eqv}.  For recent developments of upper and lower bounds  of  the numerical radius we refer the readers to see  \cite{Bag_MIA_2020, bhunia_ADM_2021, Bhunia_RMJ_2021,Bhunia_RIM_2021,Bhunia_MIA_2021,Bhunia_BSM_2021,kitt_SM_2003,Omidver} and the references therein.  In \cite{kitt_LAA_2022,kitt_LAMA_2020}, the authors studied the numerical radius inequalities of a particular class of matrices, known as sectorial matrices. They developed several sharper upper and lower bounds of the numerical radius of sectorial matrices. Let us first mention the definition of a sectorial matrix.
  A matrix $A\in  \mathcal{M}_n$ is said to be positive if $\langle Ax,x\rangle\geq 0$ for all $x\in \mathbb{C}^n$. The positive matrix $A$ is said to be positive semi-definite (positive definite) if $\langle Ax,x\rangle\geq 0$ for all $x\in \mathbb{C}^n$ ($\langle Ax,x\rangle> 0$ for all non-zero $x\in \mathbb{C}^n$), and it is denoted by $A\geq 0$ ($A>0$).
  The Cartesian decomposition of $A\in  \mathcal{M}_n$ is $A=\Re(A)+i\Im(A)$, where $\Re(A)=\frac{A+A^*}{2}$ and $\Im(A)=\frac{A-A^*}{2i}.$
  A matrix $A\in  \mathcal{M}_n$ is said to be accretive if $\Re(A)>0.$ Clearly, all positive definite matrices are accretive.  A matrix $A\in \mathcal{M}_n$ is said to be accretive-dissipative if $\Re (A) > 0$ and $ \Im (A) >0.$ For a complex number $z$, let $\Re z $ and $\Im z$ denote the real and imaginary part of the complex number $z.$ Geometrically, a matrix $A\in  \mathcal{M}_n$ is accretive if and only if $W(A)\subseteq \{z \in \mathbb{C}: \Re z >0 \}.$
  \begin{defn}
   A matrix $A\in  \mathcal{M}_n$ is said to be sectorial if $W(A) \subseteq S_{\gamma}$ for some $\gamma \in [0, \frac{\pi}2)$, where
  $$ S_{\gamma}=\left\{z \in \mathbb{C}: \Re z >0, |\Im z|\leq (\tan \gamma) \Re z \right \}.$$  Geometrically, the numerical range of a sectorial matrix  lies entirely in a cone in the right half complex plane (i.e., $ \{z \in \mathbb{C}: \Re z >0 \}$), with vertex at the origin and half-angle  $\gamma$. Clearly, a positive definite matrix is a sectorial matrix with $\gamma=0.$ Henceforth, for simplicity, we write $A\in  \Pi_{\gamma}$ when $W(A) \subseteq S_{\gamma}$  for some $\gamma \in [0,\pi/2).$
  \end{defn}  
\begin{center}
	\begin{tikzpicture}
		\draw[->] (0,0)--(5,0);
		\draw[->] (0,0)--(0,4);
		\draw[->] (0,0)--(-4,0);
		\draw[->] (0,0)--(0,-4);
		\fill[rotate=30,blue] (2,-.46)circle(1.5cm and .9cm);
		\draw[] (0,0)--(3,3);
		\draw[] (0,0)--(3,-3);
		\node [] at (-.5,-.3) {$(0,0)$};
		\node [] at (-.3,.2) {O};
		\node [] at (3.2,3.1) {P};
		\node [] at (3.2,-3.1) {Q};
		\node [] at (4.5,-.25) {X};
		\node [] at (5.2,0) {$x$};
		\node [] at (0,4.2) {$y$};
		\fill[] (0,0) circle (1pt);
		\node [] at (0,-4.5) {Sectorial matrix $A$ with sectorial index $\angle POX=\angle QOX=\gamma$};
		\draw [->] [] (3,.5)--(3.9,.5) node[right]{$W(A)$};
		
	\end{tikzpicture}
\end{center}
Note that for a sectorial matrix,  $0$ does not belong to the numerical range. On the other hand if $0$ does not belong to the numerical range then there exists $\theta$ such that $ e^{i \theta} A$ is a sectorial matrix.  Such a matrix is known as rotational sectorial matrix. 
Next we observe that any sectorial matrix is an accretive matrix. Moreover, the converse is also true, i.e., if $A\in \mathcal{M}_n$ is an accretive matrix, then $A\in \Pi_{\gamma}$ for some $\gamma \in [0,\pi/2).$ 
  For $A\in \Pi_{\gamma}$,  the following inequality  
  \begin{eqnarray}\label{eqvsec}
  	\cos \gamma \|A\| \leq w(A)	
  \end{eqnarray} 
  holds (see \cite[Prop. 3.1]{kitt_LAMA_2020}).
  We would like to note that the inequality \eqref{eqvsec} gives better bound than the first bound in \eqref{eqv}, when $\gamma \in \left[0, {\pi}/{3}\right).$
  
   In this article we present several numerical radius inequalities of the sectorial matrices, which improve on the existing  inequalities.  Further, we develop numerical radius inequalities of the commutator and anti-commutator of  matrices in $\mathcal{M}_n.$  
   
    Before we end this section we introduce  the fractional powers of matrices via Dunford-Taylor integral. 
   Let $f: \mathcal{D} \to \mathbb{C}$ be an analytic function defined on a domain $\mathcal{D}$. Let $A\in \mathcal{M}_n$ and let the spectrum of $A,$  $\sigma (A)$  (i.e., the eigenvalues of $A$),  lies in $\mathcal{D}$. Then by using Dunford-Taylor integral, $f(A)$ can be written  as 
  \begin{eqnarray}\label{int1}
  	f(A)= \frac{1}{2 \pi i}\int_{\Gamma} f(z)(zI-A)^{-1}dz,
  \end{eqnarray} 
where  $ \Gamma \subset \mathcal{D}$   be a simple closed smooth curve
with positive direction enclosing all eigenvalues of $A$ in its interior (see  \cite[p. 44]{Kato} and \cite[p. 287]{Taylor}). If $A$ is an accretive matrix then $A$ has no eigenvalues in $(-\infty, 0]$ and so 
 
  \begin{eqnarray}\label{int2}
  	A^t= \frac{1}{2 \pi i}\int_{\Gamma} z^t(zI-A)^{-1}dz,
  \end{eqnarray}
  where $t\in (0,1)$ and $z^t$ is the principal branch of the multi-valued function $z\to z^t$, and $\Gamma$ is a suitable curve. 
Thus for $0 < t < 1,$  $A^t$ is sectorial whenever $A$ is sectorial. 
  Note that for $t>1$, it is not guaranteed that $A^t$ is sectorial when $A$ is sectorial. Moreover (see  \cite{drury}), if $A\in \Pi_{\gamma}$, then 
  \begin{eqnarray}\label{tsec}
  	A^t\in \Pi_{t\gamma}, \,\,0<t<1.
  \end{eqnarray} 
For further related discussion  we refer to \cite{kitt_Pos_2021,Masser}.

\section{{Main results}}

We begin with the following lemma (see \cite{kitt_LAA_2022}), proof follows from the Cartesian decomposition of $A\in  \Pi_{\gamma}$ and the inequality  $|\langle\Im(A)x,x\rangle|\leq (\tan \gamma) \langle \Re(A)x,x\rangle$ for all $x\in \mathbb{C}^n$ with $\|x\|=1$.
	
	\begin{lemma}\label{lemma1}  $($\cite{kitt_LAA_2022}$)$
		Let $A \in \Pi_{\gamma}$. Then 
		\begin{eqnarray*}
			\|\Im(A)\| \leq (sin \gamma)\, w(A).
		\end{eqnarray*} 
	\end{lemma}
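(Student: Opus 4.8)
The plan is to exploit the self-adjointness of $\Im(A)$. Since $\Im(A)=\tfrac{A-A^*}{2i}$ is Hermitian, its operator norm coincides with its numerical radius, so that $\|\Im(A)\|=\sup_{\|x\|=1}|\langle \Im(A)x,x\rangle|$. It therefore suffices to bound the scalar $|\langle \Im(A)x,x\rangle|$ by $(\sin\gamma)\,w(A)$ for an arbitrary unit vector $x$, and then pass to the supremum.

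First I would fix a unit vector $x$ and use the Cartesian decomposition to write $\langle Ax,x\rangle=a+ib$, where $a=\langle\Re(A)x,x\rangle$ and $b=\langle\Im(A)x,x\rangle$ are both real. Because $A\in\Pi_\gamma$, the point $\langle Ax,x\rangle$ lies in $S_\gamma$, which gives $a>0$ together with the defining sectorial inequality $|b|\le(\tan\gamma)\,a$.

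The key step I would use is the elementary observation that, since $a\ge 0$, this tangent-form bound is equivalent to a sine-form bound. Squaring $|b|\le(\tan\gamma)a$ and clearing the denominator yields $b^2\cos^2\gamma\le a^2\sin^2\gamma$; adding $b^2\sin^2\gamma$ to both sides gives $b^2\le\sin^2\gamma\,(a^2+b^2)$, i.e. $|b|\le(\sin\gamma)\sqrt{a^2+b^2}=(\sin\gamma)\,|\langle Ax,x\rangle|$. From here the conclusion is immediate: $|\langle\Im(A)x,x\rangle|=|b|\le(\sin\gamma)\,|\langle Ax,x\rangle|\le(\sin\gamma)\,w(A)$, the last inequality being the definition of the numerical radius. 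Taking the supremum over all unit vectors $x$ then yields $\|\Im(A)\|\le(\sin\gamma)\,w(A)$.

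I do not expect a genuine obstacle in this argument; it is short and essentially geometric. The only two points that warrant a moment's care are (a) recording at the outset that $\|\Im(A)\|=w(\Im(A))$ because $\Im(A)$ is self-adjoint, and (b) verifying the equivalence $|b|\le(\tan\gamma)a\iff|b|\le(\sin\gamma)\sqrt{a^2+b^2}$ for $a\ge 0$, which is exactly the statement that a point inside the cone $S_\gamma$ lies within distance $(\sin\gamma)$ times its modulus of the positive real axis. Note that $\cos\gamma\neq 0$ for every $\gamma\in[0,\pi/2)$, so the rearrangement is valid throughout, and the $\gamma=0$ case reduces to $\Im(A)=0$, consistent with the stated bound.
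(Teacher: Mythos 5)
Your proof is correct and follows essentially the same route the paper indicates: the Cartesian decomposition together with the sectorial inequality $|\langle\Im(A)x,x\rangle|\le(\tan\gamma)\langle\Re(A)x,x\rangle$, converted to the sine form and combined with $\|\Im(A)\|=\sup_{\|x\|=1}|\langle\Im(A)x,x\rangle|$. No gaps.
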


Now, we are in a position to present our first inequality.
	
	\begin{theorem}\label{th1}
		Let $A \in \Pi_{\gamma}$  with $ \gamma \neq 0.$ Then 
		\begin{eqnarray}\label{th1eq0}
			w^2(A) \geq  \frac{csc^2\gamma}{4}\|AA^*+A^*A\|
			+ \frac{csc^2\gamma}{2}\left( \|\Im(A)\|^2-\|\Re(A)\|^2\right).
		\end{eqnarray}
	\end{theorem}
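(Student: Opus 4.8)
The plan is to rewrite the two-term right-hand side purely in terms of the Cartesian components and then reduce everything to Lemma~\ref{lemma1}. Writing $A=\Re(A)+i\Im(A)$ with $\Re(A),\Im(A)$ Hermitian, a direct expansion gives $AA^*+A^*A=2\big(\Re(A)^2+\Im(A)^2\big)$, so that $\frac14\|AA^*+A^*A\|=\frac12\|\Re(A)^2+\Im(A)^2\|$. Consequently the assertion \eqref{th1eq0} is equivalent to $w^2(A)\geq \frac{\csc^2\gamma}{2}\|\Re(A)^2+\Im(A)^2\|+\frac{\csc^2\gamma}{2}\big(\|\Im(A)\|^2-\|\Re(A)\|^2\big)$, which is the form I would work with.

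Next I would estimate the first summand from above. Since $\Re(A)^2$ and $\Im(A)^2$ are positive semidefinite, the triangle inequality yields $\|\Re(A)^2+\Im(A)^2\|\leq \|\Re(A)\|^2+\|\Im(A)\|^2$. Feeding this into the displayed right-hand side, the $\|\Re(A)\|^2$ contributions cancel and the $\|\Im(A)\|^2$ contributions add, collapsing it to $\csc^2\gamma\,\|\Im(A)\|^2$. Thus it suffices to establish the single inequality $w^2(A)\geq \csc^2\gamma\,\|\Im(A)\|^2$; but this is precisely Lemma~\ref{lemma1} after squaring $\|\Im(A)\|\leq (\sin\gamma)\,w(A)$ and dividing by $\sin^2\gamma$, which is legitimate because $\gamma\neq0$.

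For the actual write-up I would run this chain in the forward direction to avoid the slightly awkward ``it suffices'' phrasing: begin from $w^2(A)\geq \csc^2\gamma\,\|\Im(A)\|^2$, split $\|\Im(A)\|^2=\frac12\big(\|\Re(A)\|^2+\|\Im(A)\|^2\big)+\frac12\big(\|\Im(A)\|^2-\|\Re(A)\|^2\big)$, bound the first bracket below by $\frac12\|\Re(A)^2+\Im(A)^2\|$ via the triangle inequality, and finally substitute $\|\Re(A)^2+\Im(A)^2\|=\frac12\|AA^*+A^*A\|$. I do not anticipate a genuine obstacle here: the whole argument rests on the identity $AA^*+A^*A=2\big(\Re(A)^2+\Im(A)^2\big)$ and one application of the triangle inequality, with the sectorial hypothesis entering only through Lemma~\ref{lemma1}. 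The single point needing care is the direction of that triangle inequality, which is valid precisely because both $\Re(A)^2$ and $\Im(A)^2$ are positive semidefinite.
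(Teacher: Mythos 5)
Your argument is correct, and it is genuinely different from --- and more economical than --- the paper's proof. The paper starts from the two lower bounds $w^2(A)\geq\|\Re(A)\|^2$ and $w^2(A)\geq \csc^2\gamma\,\|\Im(A)\|^2$, combines them through the identity $\max\{a,b\}=\tfrac{a+b}{2}+\tfrac{|a-b|}{2}$, and then reshuffles terms with the triangle inequality and $AA^*+A^*A=2\left(\Re(A)^2+\Im(A)^2\right)$ until the claimed form appears. You dispense with the first bound and the max trick entirely: you observe that the right-hand side of \eqref{th1eq0} equals $\frac{\csc^2\gamma}{2}\|\Re(A)^2+\Im(A)^2\|+\frac{\csc^2\gamma}{2}\left(\|\Im(A)\|^2-\|\Re(A)\|^2\right)$, which the triangle inequality bounds above by $\csc^2\gamma\,\|\Im(A)\|^2$, so Lemma \ref{lemma1} alone finishes the proof. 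Your route is shorter and exposes something the paper's derivation obscures, namely that the bound \eqref{th1eq0} never exceeds $\csc^2\gamma\,\|\Im(A)\|^2$ and hence carries no more information than the squared form of Lemma \ref{lemma1}; the paper's intermediate quantity $\max\{\|\Re(A)\|^2,\csc^2\gamma\|\Im(A)\|^2\}$ is a genuinely stronger lower bound, but it is discarded before the final line, so the two proofs end at the same place. One small correction to your closing remark: the step $\|\Re(A)^2+\Im(A)^2\|\leq\|\Re(A)\|^2+\|\Im(A)\|^2$ needs the triangle inequality (valid for arbitrary matrices) together with the identity $\|H^2\|=\|H\|^2$ for Hermitian $H$; positive semidefiniteness of the squares is not the operative hypothesis, Hermiticity of $\Re(A)$ and $\Im(A)$ is.
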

    \begin{proof}
    	Let $x\in \mathbb{C}^n$ be such that $\|x\|=1$. From the Cartesian decomposition of $A,$ we have, $|\langle Ax,x \rangle|^2=\langle \Re(A)x,x \rangle^2+\langle \Im(A)x,x \rangle^2.$ This gives that 
    	\begin{eqnarray}\label{th1eq1}
    		w^2(A) \geq \|\Re(A)\|^2.
    	\end{eqnarray}
    	Also, Lemma \ref{lemma1} gives that 
    	\begin{eqnarray}\label{th1eq2}
    	w^2(A) \geq csc^2\gamma\|\Im(A)\|^2.
    	\end{eqnarray}
    	Therefore, from the inequalities (\ref{th1eq1}) and (\ref{th1eq2}) we have, 
    	\begin{eqnarray*}
    		 w^2(A) 
    		& \geq&  \max\{\|\Re(A)\|^2,csc^2\gamma\|\Im(A)\|^2\}\\
    		&=&\frac{\|\Re(A)\|^2+csc^2\gamma\|\Im(A)\|^2}{2}+\frac{\left|\|\Re(A)\|^2-csc^2\gamma\|\Im(A)\|^2\right|}{2}\\
    		&=&\frac{csc^2\gamma}{2}\left(\|\Re^2(A)\|+\|\Im^2(A)\|\right)+\frac{1-csc^2\gamma}{2}\|\Re(A)\|^2\\
    		&& +\left|\frac{\|\Re(A)\|^2-csc^2\gamma\|\Im(A)\|^2}{2}\right|\\
    		& \geq& \frac{csc^2\gamma}{2}\|\Re^2(A)+\Im^2(A)\|+\frac{1-csc^2\gamma}{2}\|\Re(A)\|^2\\
    		&&+\left|\frac{\|\Re(A)\|^2-csc^2\gamma\|\Im(A)\|^2}{2}\right|\\
    		& \geq& \frac{csc^2\gamma}{4}\|AA^*+A^*A\| -\frac{csc^2\gamma}{2}\|\Re(A)\|^2\\
    		&&+\left|\frac{\|\Re(A)\|^2}{2}-\frac{\|\Re(A)\|^2-csc^2\gamma\|\Im(A)\|^2}{2}\right|\\
    		&=& \frac{csc^2\gamma}{4}\|AA^*+A^*A\| -\frac{csc^2\gamma}{2}\|\Re(A)\|^2+\frac{csc^2\gamma}{2}\|\Im(A)\|^2\\
    		&=& \frac{csc^2\gamma}{4}\|AA^*+A^*A\|
    		+ \frac{csc^2\gamma}{2}\left( \|\Im(A)\|^2-\|\Re(A)\|^2\right).
    	\end{eqnarray*}
    This completes the proof.
    \end{proof}

\begin{remark}\label{rmk1}
	(i) Let $A\in \Pi_{\gamma}\, (\gamma \neq 0).$ If  $\|\Im(A)\|\geq \|\Re(A)\|$ then  from Theorem \ref{th1} we get
	\begin{eqnarray*}
		 w^2(A) &\geq& \frac{csc^2\gamma}{4}\|AA^*+A^*A\|
		 + \frac{csc^2\gamma}{2}\left( \|\Im(A)\|^2-\|\Re(A)\|^2\right)\\
		 &>& \frac{1}{4}\|AA^*+A^*A\|
		 + \frac{1}{2}\left( \|\Im(A)\|^2-\|\Re(A)\|^2\right)\,\left(\textit{$csc\gamma > 1$ $\forall$ $\gamma \in [0,\pi/2)$}\right)\\
		 & \geq &  \frac{1}{4}\|AA^*+A^*A\|.
	\end{eqnarray*}
    Thus in this case  Theorem \ref{th1} gives sharper bound than the well known bound \cite[Th. 1]{kitt_SM_2005}, namely,
    \begin{eqnarray}\label{rmk1eq1}
    	w^2(A) \geq \frac{1}{4}\|A^*A+AA^*\|.
    \end{eqnarray}
(ii) Let $A\in \Pi_{\gamma} \, (\gamma \neq 0)$ with $\|\Im(A)\|< \|\Re(A)\|.$ Then Theorem \ref{th1} gives sharper bound than the bound (\ref{rmk1eq1}) if $sin\gamma < \sqrt{1-\frac{\|\Re(A)\|^2-\|\Im(A)\|^2}{\|\Re^2(A)+\Im^2(A)\|}}.$  For example, let $A=\left( \begin{matrix}
	3+2i & 0\\
	0& 1
\end{matrix}\right)$. Then, $A\in \Pi_{\gamma}$ where $\sin \gamma =2/\sqrt{13}< \sqrt{1-\frac{\|\Re(A)\|^2-\|\Im(A)\|^2}{\|\Re^2(A)+\Im^2(A)\|}}= 2\sqrt2/\sqrt{13}.$ 
Theorem \ref{th1} gives $w^2(A)\geq 13$  whereas   (\ref{rmk1eq1}) gives $ w^2(A) \geq 13/2$. \\
 (iii) For any $A \in \mathcal{M}_n,$  it was proved in \cite[Th. 2.9]{bhunia_LAA_2021} that
\begin{eqnarray}\label{rmk1eq3}
	w^2(A)\geq\frac{1}{4}\|A^*A+AA^*\|+\frac{1}{2}|\|\Re(A)\|^2-\|\Im(A)\|^2|.	
\end{eqnarray}
We would like to remark that when $A \in \Pi_{\gamma}$ with $\|\Im(A)\|\geq \|\Re(A)\|$,  Theorem \ref{th1} gives sharper bound than the bound (\ref{rmk1eq3}).
\end{remark}

\vspace{.2cm}

Now, by applying Theorem \ref{th1} we develop upper bounds for the numerical radius of generalized commutator and anti-commutator matrices. Recall that for $A,B\in \mathcal{M}_n$, the matrix $AB-BA$ is called commutator matrix and $AB+BA$ is called anti-commutator matrix.

	\begin{theorem}\label{th2}
      Let $A,B, X,Y \in \mathcal{M}_n$ with $A \in \Pi_{\gamma} \, ( \gamma \neq 0).$ Then 
    	\begin{eqnarray*}
    		&&w(AXB\pm BYA)\\ 
    		&&\leq 2\sqrt{2}(sin\gamma) \|B\|\max\{\|X\|,\|Y\|\} \sqrt{w^2(A)-\frac{csc^2\gamma}{2}\left( \|\Im(A)\|^2-\|\Re(A)\|^2\right)}.
    	\end{eqnarray*}
   \end{theorem}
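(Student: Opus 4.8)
The plan is to separate the argument into two independent parts: a \emph{general} matrix estimate bounding $w(AXB \pm BYA)$ by $\|B\|$, $\max\{\|X\|,\|Y\|\}$ and $\sqrt{\|AA^*+A^*A\|}$, followed by an application of Theorem \ref{th1} to rewrite the factor $\sqrt{\|AA^*+A^*A\|}$ in the sectorial form appearing on the right-hand side. Since $B$, $X$, $Y$ are arbitrary, the sectorial hypothesis $A \in \Pi_\gamma$ should enter only at the very last step.

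For the first part I would fix a unit vector $u \in \mathbb{C}^n$ and write
\[
\langle (AXB \pm BYA)u, u\rangle = \langle XBu, A^*u\rangle \pm \langle YAu, B^*u\rangle .
\]
The triangle inequality followed by Cauchy--Schwarz gives
\[
|\langle (AXB \pm BYA)u, u\rangle| \leq \|X\|\,\|Bu\|\,\|A^*u\| + \|Y\|\,\|Au\|\,\|B^*u\| .
\]
Bounding $\|Bu\|, \|B^*u\| \leq \|B\|$ (using $\|B^*\|=\|B\|$) and pulling out $\max\{\|X\|,\|Y\|\}$ reduces the task to estimating $\|A^*u\| + \|Au\|$.

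The key elementary inequality is $\|A^*u\| + \|Au\| \leq \sqrt{2}\sqrt{\|A^*u\|^2 + \|Au\|^2}$ (Cauchy--Schwarz / QM--AM), together with the identity
\[
\|A^*u\|^2 + \|Au\|^2 = \langle (AA^* + A^*A)u, u\rangle \leq \|AA^* + A^*A\| .
\]
Combining these estimates and taking the supremum over all unit vectors $u$ yields the general bound
\[
w(AXB \pm BYA) \leq \sqrt{2}\,\|B\|\max\{\|X\|,\|Y\|\}\,\sqrt{\|AA^* + A^*A\|} .
\]

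To finish, I would read Theorem \ref{th1} as an upper bound for $\|AA^*+A^*A\|$: since
\[
w^2(A) - \frac{csc^2\gamma}{2}\left(\|\Im(A)\|^2 - \|\Re(A)\|^2\right) \geq \frac{csc^2\gamma}{4}\|AA^* + A^*A\| \geq 0 ,
\]
multiplying by $4sin^2\gamma$ and taking square roots gives
\[
\sqrt{\|AA^* + A^*A\|} \leq 2sin\gamma \sqrt{w^2(A) - \frac{csc^2\gamma}{2}\left(\|\Im(A)\|^2 - \|\Re(A)\|^2\right)} .
\]
Substituting this into the general bound and using $\sqrt{2}\cdot 2 = 2\sqrt{2}$ produces exactly the claimed inequality. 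I expect the only step requiring genuine insight to be this last reinterpretation of Theorem \ref{th1} as a norm bound on $AA^*+A^*A$; the commutator estimate itself is a routine Cauchy--Schwarz argument, and the quantity under the square root is automatically nonnegative by Theorem \ref{th1}, so the right-hand side is well defined.
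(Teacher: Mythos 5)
Your proof is correct and follows essentially the same route as the paper: a Cauchy--Schwarz argument reducing the numerical radius of $AXB\pm BYA$ to $\sqrt{\|AA^*+A^*A\|}$, followed by reading Theorem \ref{th1} as an upper bound on $\|AA^*+A^*A\|$. The only difference is organizational --- the paper first treats $w(AX\pm YA)$ for $\|X\|,\|Y\|\leq 1$, then normalizes and substitutes $XB$, $BY$, whereas you estimate $\langle (AXB\pm BYA)u,u\rangle$ directly --- which makes your write-up slightly more streamlined but not a different method.
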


   \begin{proof}
   	Let $x\in \mathbb{C}^n$ with $\|x\|=1$. Suppose $\|X\|,\|Y\| \leq 1.$ Then, by Cauchy-Schwarz inequality we have,
   	      \begin{eqnarray}\label{th2eq1}
   	          |\langle (AX \pm YA)x,x \rangle| & \leq & |\langle AX x,x \rangle|+ |\langle  YA x,x \rangle|\nonumber\\
   	          &=& |\langle X x,A^* x \rangle|+ |\langle  A x,Y^* x \rangle|\nonumber\\
   	          & \leq & \|A^*x\|+\|A^*x\|\nonumber\\
   	          & \leq & \sqrt{2(\|A^*x\|^2+\|A^*x\|^2)}\,\,\,~~\mbox{\big(\textit{by convexity of $f(x)=x^2$}\big)}\nonumber\\
   	          & \leq & \sqrt{2\|AA^*+A^*A\|}.
   	      \end{eqnarray}
   	      Now, it follows from Theorem \ref{th1} that
   	      \begin{eqnarray}\label{th2eq2}
   	         \|AA^*+A^*A\| \leq 4sin^2\gamma\left(w^2(A)-\frac{csc^2\gamma}{2}\left( \|\Im(A)\|^2-\|\Re(A)\|^2\right)\right).
   	      \end{eqnarray}
   	     Therefore, by using the inequality (\ref{th2eq2}) in (\ref{th2eq1}) we have,
   	     \begin{eqnarray*}
   	        |\langle (AX \pm YA)x,x \rangle| & \leq & 2\sqrt{2} (sin\gamma)\sqrt{w^2(A)-\frac{csc^2\gamma}{2}\left( \|\Im(A)\|^2-\|\Re(A)\|^2\right)}.
   	     \end{eqnarray*}
        Taking supremum over $x \in \mathbb{C}^n,$ $\|x\|=1$ we have,
        \begin{eqnarray}\label{th2eq3}
        w(AX \pm YA)\leq 2\sqrt{2}(sin\gamma)\sqrt{w^2(A)-\frac{csc^2\gamma}{2}\left( \|\Im(A)\|^2-\|\Re(A)\|^2\right)}.
        \end{eqnarray}
        Now, Theorem \ref{th2} holds trivially when $X=Y=0.$ Let $\max\{\|X\|,\|Y\|\} \neq 0.$ Then $\left\|\frac{X}{\max\{\|X\|,\|Y\|\}}\right\|\leq 1$ and $\left\|\frac{Y}{\max\{\|X\|,\|Y\|\}}\right\|\leq 1.$ Therefore, from (\ref{th2eq3}) 
        we have,
        \begin{eqnarray*} 
        && w(AX \pm YA) \\
        &&\leq 2\sqrt{2}(sin\gamma) \max\{\|X\|,\|Y\|\}\sqrt{w^2(A)-\frac{csc^2\gamma}{2}\left( \|\Im(A)\|^2-\|\Re(A)\|^2\right)}.
        \end{eqnarray*}
        Replacing $X$ and $Y$ by $XB$ and $BY,$ respectively, we get,
        \begin{eqnarray*}
        &&w(AXB \pm BYA) \\
        &&\leq 2\sqrt{2}(sin\gamma) \max\{\|XB\|,\|BY\|\}\sqrt{w^2(A)-\frac{csc^2\gamma}{2}\left( \|\Im(A)\|^2-\|\Re(A)\|^2\right)}\\
        && \leq 2\sqrt{2} (sin\gamma) \|B\|\max\{\|X\|,\|Y\|\} \sqrt{w^2(A)-\frac{csc^2\gamma}{2}\left( \|\Im(A)\|^2-\|\Re(A)\|^2\right)}.
        \end{eqnarray*}
    This completes the proof.
   	\end{proof}

The following corollary follows from Theorem \ref{th2} by taking $X=Y=I.$

\begin{cor}\label{th2cor1}
	Let $A, B \in \mathcal{M}_n$ with $A \in \Pi_{\gamma}\, ( \gamma \neq 0).$ Then 
	\begin{eqnarray*}
		w(AB \pm BA) \leq 2\sqrt{2} (sin\gamma) \|B\| \sqrt{w^2(A)-\frac{csc^2\gamma}{2}\left( \|\Im(A)\|^2-\|\Re(A)\|^2\right)}.
	\end{eqnarray*}
\end{cor}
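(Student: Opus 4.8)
The plan is to deduce this immediately from Theorem \ref{th2} by specializing the auxiliary matrices $X$ and $Y$ to the identity, so essentially no new work is required. Theorem \ref{th2} was stated for arbitrary $X, Y \in \mathcal{M}_n$ precisely so that such commutator/anti-commutator bounds could be read off as special cases, and the present corollary is the most natural instance.

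Concretely, I would set $X = Y = I$ in the statement of Theorem \ref{th2}. With this choice the expression $AXB \pm BYA$ collapses to $AIB \pm BIA = AB \pm BA$, which is exactly the anti-commutator/commutator matrix appearing in the corollary. The only remaining ingredient is the normalization factor $\max\{\|X\|,\|Y\|\}$ on the right-hand side of Theorem \ref{th2}: since $\|I\| = 1$, we have $\max\{\|I\|,\|I\|\} = 1$, so this factor simply disappears.

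Substituting these values into the conclusion of Theorem \ref{th2} yields
\begin{eqnarray*}
w(AB \pm BA) \leq 2\sqrt{2}\,(\sin\gamma)\,\|B\|\,\sqrt{w^2(A) - \frac{\csc^2\gamma}{2}\left(\|\Im(A)\|^2 - \|\Re(A)\|^2\right)},
\end{eqnarray*}
which is precisely the asserted inequality. There is no genuine obstacle here; the entire content and difficulty reside in Theorem \ref{th2} (and, upstream of it, in the lower bound from Theorem \ref{th1} together with Lemma \ref{lemma1}), while the corollary is a clean, one-line specialization. The only point worth verifying is the bookkeeping of the two factors — confirming that $AXB \pm BYA$ reduces correctly and that $\|I\| = 1$ removes the $\max$ term — after which the claim follows directly.
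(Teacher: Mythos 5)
Your proposal is correct and matches the paper exactly: the paper also obtains this corollary by setting $X=Y=I$ in Theorem \ref{th2}, so that $AXB\pm BYA$ becomes $AB\pm BA$ and the factor $\max\{\|X\|,\|Y\|\}=1$ disappears. No issues.
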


\begin{remark}\label{rmk2}
		(i) Let $A, B \in \mathcal{M}_n$ with $A\in \Pi_{\gamma}\, ( \gamma \neq 0)$ and  $\|\Im(A)\|\geq \|\Re(A)\|.$ Then from Corollary \ref{th2cor1} we have,
	\begin{eqnarray*}
		w(AB \pm BA) &\leq& 2\sqrt{2}( sin\gamma) \|B\| \sqrt{w^2(A)-\frac{csc^2\gamma}{2}\left( \|\Im(A)\|^2-\|\Re(A)\|^2\right)}\\
		 &\leq& 2\sqrt{2}( sin\gamma) \|B\| \sqrt{w^2(A)-\frac{1}{2}\left( \|\Im(A)\|^2-\|\Re(A)\|^2\right)}\\
		 &<& 2\sqrt{2} \|B\| \sqrt{w^2(A)-\frac{1}{2}\left( \|\Im(A)\|^2-\|\Re(A)\|^2\right)}\\
		&\leq&2\sqrt{2} \|B\|w(A),
	\end{eqnarray*}
	as $sin\gamma < 1$ for all $\gamma \in (0,\pi/2).$ Thus, for any $A \in \Pi_{\gamma}\, ( \gamma \neq 0),$ Corollary \ref{th2cor1} gives sharper bound than the well known bound \cite[Th. 11]{fong_1983}, namely,
	\begin{eqnarray}\label{rmk2eq1}
		w(AB \pm BA) \leq 2\sqrt{2} \|B\|w(A).
	\end{eqnarray}
	(ii) Let $A\in \Pi_{\gamma} \,(\gamma \neq 0)$ with $\|\Im(A)\|< \|\Re(A)\|.$ Then Corollary \ref{th2cor1} gives better bound than the bound (\ref{rmk2eq1}) when $cos^2\gamma >\frac{\|\Re(A)\|^2-\|\Im(A)\|^2}{2w^2(A)}$.\\
	(iii) 
	 For any $A,B \in \mathcal{M}_n,$ it was proved in  \cite[Cor. 3.4]{kitt_MS_2014} that
	\begin{eqnarray}\label{rmk2eq3}
		w(AB \pm BA) \leq 2\sqrt{2}\|B\|\sqrt{w^2(A)-\frac{\left|\|\Re(A)\|^2-\|\Im(A)\|^2\right|}{2}}.	
	\end{eqnarray}
	We would like to remark that Corollary \ref{th2cor1} gives sharper bound than
	the bound (\ref{rmk2eq3}) when $A\in \Pi_{\gamma}$ with $\|\Im(A)\|\geq \|\Re(A)\|$.
\end{remark}

On the basis of  Corollary \ref{th2cor1} we obtain the following inequality.

\begin{cor}\label{th2cor2}
	Let $A,B \in \Pi_{\gamma}\, ( \gamma \neq 0).$ Then 
	 \begin{eqnarray}\label{th2cor2eq1}
		w(AB \pm BA) \leq \min\{\beta_1,\beta_2\},
	\end{eqnarray}
 where $$\beta_1=2\sqrt{2} (sin\gamma) \|B\| \sqrt{w^2(A)-\frac{csc^2\gamma}{2}\left( \|\Im(A)\|^2-\|\Re(A)\|^2\right)}$$ and 
 $$\beta_2=2\sqrt{2} (sin\gamma) \|A\| \sqrt{w^2(B)-\frac{csc^2\gamma}{2}\left( \|\Im(B)\|^2-\|\Re(B)\|^2\right)}.$$
\end{cor}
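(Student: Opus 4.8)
The plan is to apply Corollary \ref{th2cor1} twice---once with $A$ and $B$ in the given roles, and once with the two matrices interchanged---and then to take the smaller of the two resulting bounds. The whole argument is elementary given Corollary \ref{th2cor1}; the only ingredient beyond a direct invocation is a symmetry observation.

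First I would obtain the bound $\beta_1$ for free. Since $A \in \Pi_{\gamma}$ (with $\gamma \neq 0$) and $B \in \mathcal{M}_n$, the hypotheses of Corollary \ref{th2cor1} are met with $A$ as the sectorial matrix, so that corollary yields immediately
\begin{eqnarray*}
w(AB \pm BA) \leq 2\sqrt{2}\,(sin\gamma)\,\|B\|\sqrt{w^2(A)-\frac{csc^2\gamma}{2}\left(\|\Im(A)\|^2-\|\Re(A)\|^2\right)} = \beta_1.
\end{eqnarray*}

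The key step for the second bound is to exploit the hypothesis $B \in \Pi_{\gamma}$ by swapping the roles of $A$ and $B$, using the symmetry of the commutator and anti-commutator. Since $AB+BA = BA+AB$ and $AB-BA = -(BA-AB)$, and since $w(-M)=w(M)$ for every $M \in \mathcal{M}_n$, one has the identity $w(AB \pm BA) = w(BA \pm AB)$. Now I would apply Corollary \ref{th2cor1} to $w(BA \pm AB)$, this time treating $B$ as the sectorial matrix and $A$ as the general matrix; this produces $w(BA \pm AB) \leq \beta_2$, and hence $w(AB \pm BA) \leq \beta_2$.

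Combining the two displayed inequalities gives $w(AB \pm BA) \leq \min\{\beta_1,\beta_2\}$, which is the assertion. There is no real obstacle in this proof: the only point demanding a moment's care is the sign bookkeeping in the identity $w(AB \pm BA) = w(BA \pm AB)$, which follows at once from $w(-M)=w(M)$ and matches the $+$ and $-$ cases consistently on both sides.
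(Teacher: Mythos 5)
Your proof is correct and follows essentially the same route as the paper, which simply invokes Corollary \ref{th2cor1} twice with the roles of $A$ and $B$ interchanged; your explicit sign bookkeeping via $w(-M)=w(M)$ is a welcome clarification of the step the paper leaves implicit.
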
 

\begin{proof}
	The proof follows from Corollary \ref{th2cor1} by interchanging $A$ and $B$. 
\end{proof}


Next, in the following proposition  we obtain a norm inequality for the sum of $n$ matrices which have no eigenvalues in  $(-\infty,0]$.

\begin{proposition}\label{prop1}
	Let $A_i\in \mathcal{M}_n$ with  no eigenvalues in $(-\infty,0]$ for $i=1,\ldots,n.$ Then  
	\begin{eqnarray*}
		\left\|\sum_{i=1}^{n}A_i\right\|^{1/2} \leq \sum_{i=1}^{n}\|A_i^{1/2}\|.
	\end{eqnarray*}
\end{proposition}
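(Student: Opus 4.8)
The plan is to write each $A_i$ as the square of its principal square root and then combine submultiplicativity of the operator norm with an elementary $\ell^2$--$\ell^1$ comparison. Since each $A_i$ has no eigenvalue in $(-\infty,0]$, the principal square root $A_i^{1/2}$ is well defined through the Dunford--Taylor integral \eqref{int2} with $t=1/2$. Writing $B_i=A_i^{1/2}$, the multiplicativity of the holomorphic functional calculus gives the key identity $B_i^2=A_i$; this is the one structural fact the argument needs.

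With $B_i^2=A_i$ in hand, the estimate reduces to a short chain of standard inequalities. First, the triangle inequality yields $\|\sum_{i=1}^{n}A_i\|=\|\sum_{i=1}^{n}B_i^2\|\leq\sum_{i=1}^{n}\|B_i^2\|$. Next, submultiplicativity of the operator norm gives $\|B_i^2\|\leq\|B_i\|^2$ for each $i$. Finally, for the nonnegative reals $a_i=\|B_i\|$ one has $\sum_{i=1}^{n}a_i^2\leq(\sum_{i=1}^{n}a_i)^2$, since the cross terms $2\sum_{i<j}a_ia_j$ are nonnegative. Combining these three steps gives $\|\sum_{i=1}^{n}A_i\|\leq(\sum_{i=1}^{n}\|A_i^{1/2}\|)^2$, and taking square roots produces the claimed bound.

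The only point requiring care is the identity $(A_i^{1/2})^2=A_i$, i.e.\ that the principal branch square root defined by \eqref{int2} is a genuine square root of $A_i$. This follows because $f\mapsto f(A_i)$ is an algebra homomorphism on functions analytic on a neighbourhood of $\sigma(A_i)$, applied here to $f(z)=z^{1/2}$ (so that $f(z)^2=z$ on that neighbourhood). Beyond this observation no difficulty arises: the argument uses only the triangle inequality, submultiplicativity of $\|\cdot\|$, and the comparison of the Euclidean and $\ell^1$ norms of the vector $(\|A_1^{1/2}\|,\ldots,\|A_n^{1/2}\|)$. I expect the entire proof to be essentially immediate once the square roots are introduced.
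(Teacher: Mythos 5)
Your argument is correct, and it takes a genuinely different and more elementary route than the paper. You reduce everything to scalar inequalities: with $B_i=A_i^{1/2}$ (well defined via \eqref{int2}, and satisfying $B_i^2=A_i$ because the holomorphic functional calculus is an algebra homomorphism), you chain the triangle inequality, submultiplicativity $\|B_i^2\|\leq\|B_i\|^2$, and $\sum_i a_i^2\leq\left(\sum_i a_i\right)^2$ for $a_i\geq 0$. The paper instead uses the standard block-matrix trick: it embeds $\sum_i A_i$ as the $(1,1)$ block of a product of a block row $R=(B_1\;\cdots\;B_n)$ and a block column $C=(B_1\;\cdots\;B_n)^T$ (padded with zeros), so that $\left\|\sum_i A_i\right\|\leq\|R\|\,\|C\|$, and then bounds each factor by $\sum_i\|B_i\|$ via the triangle inequality. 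Both routes land on exactly the stated bound. What the paper's factorization buys is the sharper intermediate Cauchy--Schwarz-type estimate $\left\|\sum_i A_i\right\|\leq\left\|\sum_i B_iB_i^*\right\|^{1/2}\left\|\sum_i B_i^*B_i\right\|^{1/2}$, which is the form that generalizes to results such as Lemma 3.2 of \cite{kitt_LAA_2022}; your intermediate quantity $\sum_i\|B_i\|^2$ is weaker than that one but still refines the final bound. For the proposition as stated, your shorter argument is perfectly adequate, and you correctly isolated the only nontrivial point, namely $(A_i^{1/2})^2=A_i$ for the principal branch.
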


\begin{proof}
	We have,
		\begin{eqnarray*}
		\left\|\sum_{i=1}^{n}A_i\right\|
		= && 
		\left\| \begin{pmatrix}
			 \sum_{i=1}^{n}A_i&0&\ldots&0\\
			0&0&\ldots&0\\
			\vdots\\
			0&0&\ldots&0
		\end{pmatrix}\right\|\\
		= &&\left\|\begin{pmatrix}
		A^{1/2}_1&A^{1/2}_2&\ldots&A^{1/2}_n\\
		0&0&\ldots&0\\
		\vdots\\
		0&0&\ldots&0
	\end{pmatrix}
	\begin{pmatrix}
		A^{1/2}_1&0&\ldots&0\\
		A^{1/2}_2&0&\ldots&0\\
		\vdots\\
		A^{1/2}_n&0&\ldots&0
	\end{pmatrix}\right\|\\
     \leq && \left\|\begin{pmatrix}
    	A^{1/2}_1&A^{1/2}_2&\ldots&A^{1/2}_n\\
    	0&0&\ldots&0\\
    	\vdots\\
    	0&0&\ldots&0
    \end{pmatrix}\right\|
    \left\|\begin{pmatrix}
    	A^{1/2}_1&0&\ldots&0\\
    	A^{1/2}_2&0&\ldots&0\\
    	\vdots\\
    	A^{1/2}_n&0&\ldots&0
    \end{pmatrix}\right\|\\
     \leq && \left(\left\|\begin{pmatrix}
   	A^{1/2}_1&0&\ldots&0\\
   	0&0&\ldots&0\\
   	\vdots\\
   	0&0&\ldots&0
   \end{pmatrix}\right\|+\ldots+\left\|\begin{pmatrix}
   0&0&\ldots&A^{1/2}_n\\
   0&0&\ldots&0\\
   \vdots\\
   0&0&\ldots&0
\end{pmatrix}\right\|\right)\\
   && \times  \left(\left\|\begin{pmatrix}
   	A^{1/2}_1&0&\ldots&0\\
   	0&0&\ldots&0\\
   	\vdots\\
   	0&0&\ldots&0
   \end{pmatrix}\right\|+\ldots+\left\|\begin{pmatrix}
   	0&0&\ldots&0\\
   	0&0&\ldots&0\\
   	\vdots\\
   	A^{1/2}_n&0&\ldots&0
   \end{pmatrix}\right\|\right)\\
  =  &&  \left(\sum_{i=1}^{n}\|A_i^{1/2}\|\right)^2.
	\end{eqnarray*}
 This completes the proof.
\end{proof}

Note that the inequality \cite[Lemma 3.2]{kitt_LAA_2022} follows from Proposition \ref{prop1} (for  $n=2$).   
Now, by using Proposition \ref{prop1} we obtain the following corollary, and for this first we note that the following lemma (see \cite{kitt_LAA_2022}) which  follows from the norm inequalities (see  \cite{Garling,Mirman}):
 $\|A\|^2\leq \|\Re(A)\|^2+2\|\Im(A)\|^2$ when $A\in \mathcal{M}_n$ is accretive and $\|A\|^2\leq \|\Re(A)\|^2+\|\Im(A)\|^2$ when $A\in \mathcal{M}_n$ is accretive-dissipative.

\begin{lemma}\label{lemma3}  $($\cite{kitt_LAA_2022}$)$
	Let $A \in \Pi_{\gamma}.$ Then 
	\begin{eqnarray*}
		\|A\| \leq \sqrt{1+2sin^2\gamma} \,w(A).
	\end{eqnarray*} 
	Moreover, if $A \in \Pi_{\gamma}$ is accretive-dissipative, then
	\begin{eqnarray*}
		\|A\| \leq \sqrt{1+sin^2\gamma}\, w(A).
	\end{eqnarray*}  
\end{lemma}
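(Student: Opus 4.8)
The goal is to prove Lemma~\ref{lemma3}, namely that for $A \in \Pi_{\gamma}$ one has $\|A\| \leq \sqrt{1+2\sin^2\gamma}\,w(A)$, with the sharper constant $\sqrt{1+\sin^2\gamma}$ when $A$ is additionally accretive-dissipative.

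\textbf{Approach.} The plan is to combine the two norm inequalities quoted immediately before the lemma with the single structural fact we know about sectorial matrices, Lemma~\ref{lemma1}. The quoted inequalities read $\|A\|^2 \leq \|\Re(A)\|^2 + 2\|\Im(A)\|^2$ for accretive $A$, and $\|A\|^2 \leq \|\Re(A)\|^2 + \|\Im(A)\|^2$ for accretive-dissipative $A$. Since every $A \in \Pi_{\gamma}$ is accretive (as recorded in the introduction), the first inequality applies to all sectorial matrices, and the second applies in the accretive-dissipative subcase. The only remaining task is to bound the right-hand sides purely in terms of $w(A)$.

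\textbf{Key steps.} First I would bound $\|\Re(A)\|$ by $w(A)$: since $\Re(A)$ is Hermitian its norm equals its numerical radius, and for any $x$ with $\|x\|=1$ we have $\langle \Re(A)x,x\rangle = \Re\langle Ax,x\rangle \leq |\langle Ax,x\rangle| \leq w(A)$, so that $\|\Re(A)\| \leq w(A)$. Next, for the imaginary part I would invoke Lemma~\ref{lemma1} directly, which gives $\|\Im(A)\| \leq (\sin\gamma)\,w(A)$, hence $\|\Im(A)\|^2 \leq \sin^2\gamma\, w^2(A)$. Substituting both bounds into the accretive inequality yields
\begin{eqnarray*}
\|A\|^2 \leq \|\Re(A)\|^2 + 2\|\Im(A)\|^2 \leq w^2(A) + 2\sin^2\gamma\, w^2(A) = (1+2\sin^2\gamma)\,w^2(A),
\end{eqnarray*}
and taking square roots gives the first claim. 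For the accretive-dissipative case I would feed the same two bounds into the sharper norm inequality $\|A\|^2 \leq \|\Re(A)\|^2 + \|\Im(A)\|^2$, obtaining $\|A\|^2 \leq (1+\sin^2\gamma)\,w^2(A)$, which gives the second claim after taking square roots.

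\textbf{Main obstacle.} There is essentially no hard step here: once the two Garling--Mirman-type norm inequalities are granted (as the lemma explicitly does), the proof is a direct substitution using $\|\Re(A)\| \leq w(A)$ and Lemma~\ref{lemma1}. The one point worth stating carefully is the justification that $\|\Re(A)\| \leq w(A)$ for the real part of a general sectorial matrix; this rests on the elementary observation that the norm of a Hermitian matrix coincides with its numerical radius, together with $\Re\langle Ax,x\rangle \leq |\langle Ax,x\rangle|$. Everything else is routine algebra with the exponents $1+2\sin^2\gamma$ and $1+\sin^2\gamma$.
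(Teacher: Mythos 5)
Your proof is correct and follows essentially the same route the paper indicates: the lemma is quoted from \cite{kitt_LAA_2022} and the authors note it follows from the Garling--Mirman norm inequalities $\|A\|^2\leq \|\Re(A)\|^2+2\|\Im(A)\|^2$ (accretive case) and $\|A\|^2\leq \|\Re(A)\|^2+\|\Im(A)\|^2$ (accretive-dissipative case), combined with $\|\Re(A)\|\leq w(A)$ and Lemma~\ref{lemma1}, which is exactly your substitution argument. No gaps; your justification of $\|\Re(A)\|\leq w(A)$ via the Hermitian norm--numerical-radius identity is the standard one.
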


\begin{cor}\label{prop1cor1}
	Let $A_i\in \Pi_{\gamma}$ for $i=1,\ldots,n.$ Then
	\begin{eqnarray*}
		w^{1/2}\left(\sum_{i=1}^{n}A_i\right) \leq \sqrt{1+2sin^2\gamma/2} \,\sum_{i=1}^{n}w(A_i^{1/2}).
	\end{eqnarray*}
Moreover, if $A_i$ is accretive-dissipative for $i=1,\ldots,n,$ then
\begin{eqnarray*}
	w^{1/2}\left(\sum_{i=1}^{n}A_i\right) \leq \sqrt{1+sin^2\gamma/2}\,\sum_{i=1}^{n}w(A_i^{1/2}).
\end{eqnarray*}
\end{cor}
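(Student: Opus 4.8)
The plan is to derive Corollary~\ref{prop1cor1} by combining Proposition~\ref{prop1} with Lemma~\ref{lemma3}, exploiting the fact that for $0<t<1$ the fractional power $A^t$ inherits sectoriality in a scaled cone via the relation \eqref{tsec}. Concretely, each $A_i\in\Pi_\gamma$ has no eigenvalues in $(-\infty,0]$, so $A_i^{1/2}$ is well-defined through the Dunford--Taylor integral \eqref{int2}, and by \eqref{tsec} with $t=1/2$ we have $A_i^{1/2}\in\Pi_{\gamma/2}$. This is the crucial observation: the half-power lives in the \emph{half-angle} cone, so the sectorial index to feed into Lemma~\ref{lemma3} is $\gamma/2$, not $\gamma$. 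This explains the appearance of $\sin^2(\gamma/2)$ (written in the statement as $\sin^2\gamma/2$) rather than $\sin^2\gamma$ in the final bound.

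First I would invoke Proposition~\ref{prop1} to obtain the operator-norm inequality $\left\|\sum_{i=1}^n A_i\right\|^{1/2}\le\sum_{i=1}^n\|A_i^{1/2}\|$. Next, since $A_i^{1/2}\in\Pi_{\gamma/2}$, Lemma~\ref{lemma3} applied with sectorial index $\gamma/2$ gives $\|A_i^{1/2}\|\le\sqrt{1+2\sin^2(\gamma/2)}\,w(A_i^{1/2})$ for each $i$. Chaining these two facts yields
\begin{eqnarray*}
\left\|\sum_{i=1}^n A_i\right\|^{1/2}\le\sqrt{1+2\sin^2(\gamma/2)}\,\sum_{i=1}^n w(A_i^{1/2}).
\end{eqnarray*}
Finally, to convert the left-hand operator norm into a numerical radius I would use the elementary lower bound $w(B)\le\|B\|$ from \eqref{eqv}, which gives $w\!\left(\sum_i A_i\right)^{1/2}\le\left\|\sum_i A_i\right\|^{1/2}$, completing the first inequality. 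For the accretive-dissipative refinement I would instead apply the second (sharper) bound of Lemma~\ref{lemma3}, noting that if each $A_i$ is accretive-dissipative then so is $A_i^{1/2}$ (its numerical range still sits in the open first quadrant, being contained in the cone $\Pi_{\gamma/2}$ which lies in the right half-plane with positive imaginary part preserved under the half-power), yielding the factor $\sqrt{1+\sin^2(\gamma/2)}$.

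The main obstacle I anticipate is justifying rigorously that $A_i^{1/2}$ is accretive-dissipative whenever $A_i$ is: sectoriality of the square root follows cleanly from \eqref{tsec}, but the accretive-dissipative property requires that $\Im(A_i^{1/2})>0$, i.e. that the numerical range of the square root stays strictly in the upper-right quadrant. One must confirm that the principal branch $z\mapsto z^{1/2}$ maps the relevant sector into a sub-sector of the first quadrant and that this geometric fact about the scalar map transfers to the numerical range of the matrix; the inclusion $A_i^{1/2}\in\Pi_{\gamma/2}$ together with $\gamma/2<\pi/4$ guarantees the real part stays positive, but the strict positivity of the imaginary part needs the hypothesis $\Im(A_i)>0$ to be propagated through the integral representation \eqref{int2}. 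Apart from this verification, the remaining steps are routine substitutions, so the proof should be short once the sectorial index of the half-power is correctly identified as $\gamma/2$.
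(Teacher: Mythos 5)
Your proof follows exactly the paper's argument: Proposition~\ref{prop1} for the norm of the sum, then \eqref{tsec} to place $A_i^{1/2}$ in $\Pi_{\gamma/2}$, then Lemma~\ref{lemma3} with sectorial index $\gamma/2$, and finally $w(\cdot)\le\|\cdot\|$ to pass to the numerical radius. The subtlety you flag about propagating the accretive-dissipative property to $A_i^{1/2}$ is real but is silently assumed in the paper's one-line proof as well, and your observation that $W(A_i^{1/2})$ remains in the open first quadrant (since the principal square root halves the arguments of a sector strictly inside the first quadrant) is the right way to close it.
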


\begin{proof}
	Using Proposition \ref{prop1},  Lemma \ref{lemma3} and \eqref{tsec} we have, 
	\begin{eqnarray*}
		w^{1/2}\left(\sum_{i=1}^{n}A_i\right)
		 \leq  \left\|\sum_{i=1}^{n}A_i\right\|^{1/2}
		 \leq \sum_{i=1}^{n}\|A_i^{1/2}\| \leq \sqrt{1+2sin^2\gamma/2}\,\sum_{i=1}^{n}w(A_i^{1/2}).
	\end{eqnarray*}
   Similarly, for accretive-dissipative matrices $A_i$ for $i=1,\ldots,n,$ we have,
   \begin{eqnarray*}
   	w^{1/2}\left(\sum_{i=1}^{n}A_i\right)
   	\leq  \left\|\sum_{i=1}^{n}A_i\right\|^{1/2}
   	\leq \sum_{i=1}^{n}\|A_i^{1/2}\| \leq \sqrt{1+sin^2\gamma/2}\,\sum_{i=1}^{n}w(A_i^{1/2}).
   \end{eqnarray*}
   
\end{proof}

From Corollary \ref{prop1cor1} (for $n=1$) we have the following inequality, which is   also given in \cite{kitt_LAA_2022}.

\begin{cor}\label{lemma5}  
	Let $A \in\Pi_{\gamma}.$ Then $$w^{1/2}(A) \leq \sqrt{1+2sin^2\gamma/2}\,w(A^{1/2}).$$ Further, if $A$ is accretive-dissipative, then $$w^{1/2}(A) \leq \sqrt{1+sin^2\gamma/2}\,w(A^{1/2}).$$
\end{cor}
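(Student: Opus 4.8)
The plan is to obtain Corollary \ref{lemma5} as the special case $n=1$ of Corollary \ref{prop1cor1}, so essentially no new work is needed; the substantive content is already packaged in the earlier results. Concretely, taking a single matrix $A_1=A$ (i.e.\ $n=1$) in Corollary \ref{prop1cor1} collapses the sum to one term and yields $w^{1/2}(A)\le\sqrt{1+2\sin^{2}(\gamma/2)}\,w(A^{1/2})$ immediately, while the accretive-dissipative clause of that corollary, specialized the same way, gives the sharper factor $\sqrt{1+\sin^{2}(\gamma/2)}$.

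To exhibit the dependence transparently, I would trace the three underlying inequalities that drive Corollary \ref{prop1cor1} at $n=1$. First, by the norm equivalence \eqref{eqv} we have $w(A)\le\|A\|$, hence $w^{1/2}(A)\le\|A\|^{1/2}$. Second, the $n=1$ instance of Proposition \ref{prop1}, equivalently submultiplicativity of the operator norm applied to $A=(A^{1/2})^{2}$, gives $\|A\|^{1/2}\le\|A^{1/2}\|$. Third, since $A\in\Pi_{\gamma}$, the fractional-power relation \eqref{tsec} with $t=1/2$ places $A^{1/2}\in\Pi_{\gamma/2}$, so Lemma \ref{lemma3} applied to $A^{1/2}$ yields $\|A^{1/2}\|\le\sqrt{1+2\sin^{2}(\gamma/2)}\,w(A^{1/2})$. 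Concatenating these three bounds proves the first assertion.

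For the accretive-dissipative statement I would repeat exactly the same chain but invoke the second (sharper) bound of Lemma \ref{lemma3}, which requires $A^{1/2}$ to be \emph{itself} accretive-dissipative with sectorial index $\gamma/2$. This is the one point deserving care, and the only place I anticipate any friction: one must confirm that $A^{1/2}$ inherits accretive-dissipativity from $A$. The relation \eqref{tsec} alone only gives the symmetric containment $W(A^{1/2})\subseteq S_{\gamma/2}$; the extra input is that the half-angle scaling of the sectorial functional calculus is \emph{directed}, so that the numerical range of $A$ lying in the open sector between the positive real axis and angle $\gamma$ (the intersection of the open first quadrant with $S_\gamma$, which is compact and hence bounded away from the real axis) forces $W(A^{1/2})$ into the open sector between the positive real axis and angle $\gamma/2$. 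Thus $\Im(A^{1/2})>0$, $A^{1/2}$ is accretive-dissipative, and the second bound of Lemma \ref{lemma3} gives $\|A^{1/2}\|\le\sqrt{1+\sin^{2}(\gamma/2)}\,w(A^{1/2})$, completing the proof. Apart from this structural remark the argument is a pure substitution, so I expect it to be very short.
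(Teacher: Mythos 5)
Your proposal is correct and takes essentially the same route as the paper, which obtains this corollary verbatim as the $n=1$ case of Corollary \ref{prop1cor1}, i.e.\ the chain $w^{1/2}(A)\le\|A\|^{1/2}\le\|A^{1/2}\|\le\sqrt{1+2\sin^{2}(\gamma/2)}\,w(A^{1/2})$ via \eqref{eqv}, Proposition \ref{prop1}, \eqref{tsec} and Lemma \ref{lemma3}. Your extra care in verifying that $A^{1/2}$ inherits accretive-dissipativity (via the directed sector-halving of \cite{drury} together with compactness of $W(A)$) is a point the paper passes over silently, and it is handled correctly.
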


From the above corollary we prove the following power inequality.

\begin{theorem}\label{cor1}
	Let $A \in\Pi_{\gamma}.$ Then 
	\begin{eqnarray}\label{cor1eq1}
		w^{1/2^n}(A) \leq \Pi_{i=1}^{n}\left(1+2sin^2\gamma/2^i\right)^{\frac{1}{2^{n+1-i}}}w(A^{1/2^n}),
	\end{eqnarray}
	for all $n \in \mathbb{N}.$  
	Further, if $A$ is accretive-dissipative, then
	\begin{eqnarray}\label{cor1eq2}
		w^{1/2^n}(A) \leq \Pi_{i=1}^{n}\left(1+sin^2\gamma/2^i\right)^{\frac{1}{2^{n+1-i}}}w(A^{1/2^n}),
	\end{eqnarray}
	for all $n \in \mathbb{N}.$ 
\end{theorem}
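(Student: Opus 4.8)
The plan is to prove \eqref{cor1eq1} and \eqref{cor1eq2} by induction on $n$, with Corollary~\ref{lemma5} serving as the single-step engine and the power relation \eqref{tsec} used to descend through the dyadic roots $A^{1/2},A^{1/4},\ldots$ of $A$. For $n=1$, \eqref{cor1eq1} is exactly $w^{1/2}(A)\le\left(1+2\sin^2\gamma/2\right)^{1/2}w(A^{1/2})$, the first bound of Corollary~\ref{lemma5}, and \eqref{cor1eq2} for $n=1$ is its second (accretive-dissipative) bound; this settles the base case.

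For the inductive step I would assume \eqref{cor1eq1} for a fixed $n$ and \emph{every} sectorial matrix (each with its own index). First apply Corollary~\ref{lemma5} to $A$ to peel off one root,
$$w^{1/2}(A)\le\left(1+2\sin^2\gamma/2\right)^{1/2}w(A^{1/2}),$$
and raise both sides to the power $1/2^{n}$ (valid since $t\mapsto t^{1/2^{n}}$ is nondecreasing on $[0,\infty)$), obtaining
$$w^{1/2^{n+1}}(A)\le\left(1+2\sin^2\gamma/2\right)^{1/2^{n+1}}w^{1/2^{n}}(A^{1/2}).$$
By \eqref{tsec}, $A^{1/2}\in\Pi_{\gamma/2}$, so the induction hypothesis applies to $A^{1/2}$ with index $\gamma/2$; using $(\gamma/2)/2^{i}=\gamma/2^{i+1}$ and $(A^{1/2})^{1/2^{n}}=A^{1/2^{n+1}}$ it gives
$$w^{1/2^{n}}(A^{1/2})\le\prod_{i=1}^{n}\left(1+2\sin^2\gamma/2^{i+1}\right)^{1/2^{n+1-i}}w(A^{1/2^{n+1}}).$$
Substituting this into the previous display and re-indexing the product via $j=i+1$ turns the exponent $1/2^{n+1-i}$ into $1/2^{n+2-j}$, while the isolated factor $\left(1+2\sin^2\gamma/2\right)^{1/2^{n+1}}$ is precisely the missing $j=1$ term; the two combine into $\prod_{j=1}^{n+1}\left(1+2\sin^2\gamma/2^{j}\right)^{1/2^{(n+1)+1-j}}w(A^{1/2^{n+1}})$, which is \eqref{cor1eq1} with $n$ replaced by $n+1$. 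A routine bookkeeping check of these exponents is the only calculational content of the step.

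Inequality \eqref{cor1eq2} follows from the identical induction, simply replacing the first bound of Corollary~\ref{lemma5} by its second throughout, which turns every factor $1+2\sin^2(\cdot)$ into $1+\sin^2(\cdot)$. Here the point I expect to need the most care is that each stage invokes the \emph{accretive-dissipative} form of Corollary~\ref{lemma5} for the intermediate matrix $A^{1/2^{i}}$, so one must verify that accretive-dissipativity is inherited by the dyadic roots. This follows from a rotation argument: since $W(A)$ is compact and lies in the open first quadrant, its arguments fill some $[\alpha,\beta]\subseteq(0,\pi/2)$, and with $\mu=(\alpha+\beta)/2$, $\delta=(\beta-\alpha)/2$ the matrix $e^{-i\mu}A$ lies in $\Pi_{\delta}$; then $e^{-it\mu}A^{t}=(e^{-i\mu}A)^{t}\in\Pi_{t\delta}$ by \eqref{tsec}, so $W(A^{t})$ has arguments in $[t\alpha,t\beta]\subseteq(0,\pi/2)$ for $0<t<1$. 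In particular each $A^{1/2^{i}}$ is again accretive-dissipative and sectorial, so the sharper constant is available at every stage and the induction closes.
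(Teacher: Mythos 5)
Your proof is correct and follows essentially the same route as the paper's: induction with Corollary~\ref{lemma5} as the one-step estimate and \eqref{tsec} to control the sectorial index of the dyadic roots, the only structural difference being that you peel the root off at $A$ (so your induction hypothesis must be quantified over all sectorial matrices) whereas the paper applies its hypothesis to the fixed $A$ and then uses Corollary~\ref{lemma5} on $A^{1/2^k}$. Your explicit verification that accretive-dissipativity is inherited by each $A^{1/2^i}$ addresses a point the paper leaves implicit, and is a welcome bit of extra care.
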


\begin{proof}
	We prove the inequality (\ref{cor1eq1}) by mathematical induction. It is clear form Corollary \ref{lemma5} that the inequality (\ref{cor1eq1}) holds for $n=1.$ Now, we assume that the inequality (\ref{cor1eq1}) holds for $n=k\, (k\geq 1),$ i.e., 
	\begin{eqnarray}\label{cor1eq3}
		w^{1/2^k}(A) \leq \Pi_{i=1}^{k}\left(1+2sin^2\gamma/2^i\right)^{\frac{1}{2^{k+1-i}}}w(A^{1/2^k}).
	\end{eqnarray}
	By \eqref{tsec} we have, $A^{1/2^k}\in \Pi_{\gamma/2^k}.$ So, it follows from Corollary \ref{lemma5} that
	\begin{eqnarray}\label{cor1eq4}
		w(A^{1/2^k}) \leq (1+2sin^2\gamma/2^{k+1}) w^2(A^{1/2^{k+1}}).
	\end{eqnarray}
	From (\ref{cor1eq3}) and (\ref{cor1eq4}), we have
	\begin{eqnarray*}
		w^{1/2^k}(A) &\leq& \Pi_{i=1}^{k}\left(1+2sin^2\gamma/2^i\right)^{\frac{1}{2^{k+1-i}}}(1+2sin^2\gamma/2^{k+1}) w^2(A^{1/2^{k+1}})\\
		&=& \Pi_{i=1}^{k+1}\left(1+2sin^2\gamma/2^i\right)^{\frac{1}{2^{k+1-i}}} w^2(A^{1/2^{k+1}}).
	\end{eqnarray*}
	Therefore,
	\begin{eqnarray*}
		w^{1/2^{k+1}}(A) \leq \Pi_{i=1}^{k+1}\left(1+2sin^2\gamma/2^i\right)^{\frac{1}{2^{k+2-i}}} w(A^{1/2^{k+1}}).
	\end{eqnarray*}
	Therefore, the inequality (\ref{cor1eq1}) holds for $n=k+1.$ Hence the inequality (\ref{cor1eq1}) holds  for all $n \in \mathbb{N}.$ Proceeding similarly as above, and using Corollary \ref{lemma5} for accretive-dissipative matrices, we get the desired
	inequality (\ref{cor1eq2}). 
\end{proof}

Next proposition reads as follows:

\begin{proposition}\label{prop2}
	Let $A,B\in \mathcal{M}_n$ with  no eigenvalues in $(-\infty,0].$ Then  
	\begin{eqnarray*}
		(i)~~\|A+B\| &\leq& (\|A^{\alpha}\|+\|B^{\alpha}\|)(\|A^{1-\alpha}\|+\|B^{1-\alpha}\|),\\
		(ii)~~\|A+B\| &\leq& (\|A^{\alpha}\|+\|B^{1-\alpha}\|)(\|A^{1-\alpha}\|+\|B^{\alpha}\|),
	\end{eqnarray*}
for all $\alpha \in (0,1).$
\end{proposition}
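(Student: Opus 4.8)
The plan is to imitate the block-matrix factorization used in the proof of Proposition \ref{prop1}, now exploiting the semigroup law for fractional powers of a fixed matrix. Since $A$ and $B$ have no eigenvalues in $(-\infty,0]$, the principal-branch powers $A^{\alpha}, A^{1-\alpha}, B^{\alpha}, B^{1-\alpha}$ are all well defined via the Dunford--Taylor integral \eqref{int2}. The first thing I would record is the algebraic identity $A^{\alpha}A^{1-\alpha}=A$ and $B^{\alpha}B^{1-\alpha}=B$, together with the fact that powers of a single matrix commute, so that also $B=B^{1-\alpha}B^{\alpha}$. This rests on the multiplicativity $f(A)g(A)=(fg)(A)$ of the holomorphic functional calculus and the scalar identity $z^{\alpha}z^{1-\alpha}=z$ for the principal branch off $(-\infty,0]$.

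For part $(i)$ I would introduce the $2n\times 2n$ factorization
\[
\begin{pmatrix} A+B & 0 \\ 0 & 0 \end{pmatrix}
=\begin{pmatrix} A^{\alpha} & B^{\alpha} \\ 0 & 0 \end{pmatrix}
\begin{pmatrix} A^{1-\alpha} & 0 \\ B^{1-\alpha} & 0 \end{pmatrix},
\]
whose $(1,1)$ block is $A^{\alpha}A^{1-\alpha}+B^{\alpha}B^{1-\alpha}=A+B$ by the identity above, while its operator norm equals $\|A+B\|$. Taking norms and applying submultiplicativity reduces the task to bounding the two factors, and each factor is estimated exactly as in Proposition \ref{prop1}: a blockwise triangle inequality gives that the norm of the first factor is at most $\|A^{\alpha}\|+\|B^{\alpha}\|$ and that of the second is at most $\|A^{1-\alpha}\|+\|B^{1-\alpha}\|$, using that a block with a single nonzero $n\times n$ entry has operator norm equal to the norm of that entry. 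Multiplying the two bounds yields $(i)$.

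For part $(ii)$ I would run the same argument with the alternative factorization
\[
\begin{pmatrix} A+B & 0 \\ 0 & 0 \end{pmatrix}
=\begin{pmatrix} A^{\alpha} & B^{1-\alpha} \\ 0 & 0 \end{pmatrix}
\begin{pmatrix} A^{1-\alpha} & 0 \\ B^{\alpha} & 0 \end{pmatrix},
\]
whose $(1,1)$ block is $A^{\alpha}A^{1-\alpha}+B^{1-\alpha}B^{\alpha}=A+B$, this time invoking $B=B^{1-\alpha}B^{\alpha}$. The same blockwise estimate then gives $\|A+B\|\leq(\|A^{\alpha}\|+\|B^{1-\alpha}\|)(\|A^{1-\alpha}\|+\|B^{\alpha}\|)$.

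The only point that is not pure block-matrix bookkeeping is the justification of the semigroup identity $A^{\alpha}A^{1-\alpha}=A$ and the commutation $B^{\alpha}B^{1-\alpha}=B^{1-\alpha}B^{\alpha}$, and I expect this to be the main (though modest) obstacle. Concretely, one must verify that $z^{\alpha}z^{1-\alpha}=z$ holds throughout a neighborhood of $\sigma(A)\cup\sigma(B)$ avoiding the cut $(-\infty,0]$, so that the multiplicativity of the Dunford--Taylor calculus may be applied with $f(z)=z^{\alpha}$ and $g(z)=z^{1-\alpha}$; once this is in place, parts $(i)$ and $(ii)$ follow immediately from the factorizations above.
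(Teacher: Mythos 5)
Your proof is correct and follows essentially the same route as the paper: the identical $2\times 2$ block factorizations of $\mathrm{diag}(A+B,0)$, followed by submultiplicativity and the blockwise triangle inequality. The only difference is that you explicitly justify the semigroup identity $A^{\alpha}A^{1-\alpha}=A$ via the holomorphic functional calculus, which the paper takes for granted.
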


\begin{proof}
	(i) We have,
	\begin{eqnarray*}
	       \|A+B\| &=& \left\|\begin{pmatrix}
	       	A+B&0\\
	       	0&0
	       \end{pmatrix}\right\|\\
       &=&  \left\|\begin{pmatrix}
       	A^{\alpha}&B^{\alpha}\\
       	0&0
       \end{pmatrix}\begin{pmatrix}
       A^{1-\alpha}&0\\
       B^{1-\alpha}&0
   \end{pmatrix}\right\|\\
     &\leq & \left\|\begin{pmatrix}
     	A^{\alpha}&B^{\alpha}\\
     	0&0
     \end{pmatrix}\right\|\left\|\begin{pmatrix}
     	A^{1-\alpha}&0\\
     	B^{1-\alpha}&0
     \end{pmatrix}\right\|\\
    & \leq & \left(\left\|\begin{pmatrix}
    	A^{\alpha}&0\\
    	0&0
    \end{pmatrix}\right\|+\left\|\begin{pmatrix}
    0&B^{\alpha}\\
    0&0
\end{pmatrix}\right\|\right)\left(\left\|\begin{pmatrix}
A^{1-\alpha}&0\\
0&0
\end{pmatrix}\right\|+\left\|\begin{pmatrix}
0&0\\
B^{1-\alpha}&0
\end{pmatrix}\right\|\right)\\
&=& (\|A^{\alpha}\|+\|B^{\alpha}\|)(\|A^{1-\alpha}\|+\|B^{1-\alpha}\|).
    \end{eqnarray*}

\noindent (ii)  We have,
\begin{eqnarray*}
	\|A+B\| &=& \left\|\begin{pmatrix}
		A+B&0\\
		0&0
	\end{pmatrix}\right\|\\
	&=&  \left\|\begin{pmatrix}
		A^{\alpha}&B^{1-\alpha}\\
		0&0
	\end{pmatrix}\begin{pmatrix}
		A^{1-\alpha}&0\\
		B^{\alpha}&0
	\end{pmatrix}\right\|\\
	&\leq & \left\|\begin{pmatrix}
		A^{\alpha}&B^{1-\alpha}\\
		0&0
	\end{pmatrix}\right\|\left\|\begin{pmatrix}
		A^{1-\alpha}&0\\
		B^{\alpha}&0
	\end{pmatrix}\right\|\\
	& \leq & \left(\left\|\begin{pmatrix}
		A^{\alpha}&0\\
		0&0
	\end{pmatrix}\right\|+\left\|\begin{pmatrix}
		0&B^{1-\alpha}\\
		0&0
	\end{pmatrix}\right\|\right)\left(\left\|\begin{pmatrix}
		A^{1-\alpha}&0\\
		0&0
	\end{pmatrix}\right\|+\left\|\begin{pmatrix}
		0&0\\
		B^{\alpha}&0
	\end{pmatrix}\right\|\right)\\
	&=& \left(\|A^{\alpha}\|+\|B^{1-\alpha}\|)(\|A^{1-\alpha}\|+\|B^{\alpha}\|\right).
\end{eqnarray*}

\end{proof}

Based on Proposition \ref{prop2} we obtain the following corollary.

\begin{cor}\label{prop2cor1}
	Let $A,B \in\Pi_{\gamma}.$ Then for all $\alpha \in (0,1),$
	\begin{eqnarray*}
		 w(A+B)
		& \leq&  \sqrt{1+2sin^2\alpha\gamma} \sqrt{1+2sin^2(1-\alpha)\gamma}\\
		&& \times \left(w(A^\alpha)+w(B^\alpha)\right)\left(w(A^{1-\alpha})+w(B^{1-\alpha})\right)
	\end{eqnarray*}
and
		\begin{eqnarray*} w(A+B)
		& \leq& (\sqrt{1+2sin^2\alpha\gamma}\,w(A^\alpha)+ \sqrt{1+2sin^2(1-\alpha)\gamma}\,w(B^{1-\alpha}))\\
		&& \times  (\sqrt{1+2sin^2(1-\alpha)\gamma}\,w(A^{1-\alpha})+ \sqrt{1+2sin^2\alpha\gamma}\,w(B^{\alpha})).
	\end{eqnarray*}
   
\end{cor}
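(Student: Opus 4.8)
The plan is to derive Corollary \ref{prop2cor1} directly from Proposition \ref{prop2} by converting each operator-norm factor into a numerical-radius quantity via Lemma \ref{lemma3}, taking care to track the sectorial index of each fractional power. The starting point for the first inequality is part (i) of Proposition \ref{prop2} applied to $A$ and $B$, which gives
\begin{eqnarray*}
	\|A+B\| \leq (\|A^{\alpha}\|+\|B^{\alpha}\|)(\|A^{1-\alpha}\|+\|B^{1-\alpha}\|).
\end{eqnarray*}
Since $A,B\in\Pi_{\gamma}$, the relation \eqref{tsec} tells us that $A^{\alpha},B^{\alpha}\in\Pi_{\alpha\gamma}$ and $A^{1-\alpha},B^{1-\alpha}\in\Pi_{(1-\alpha)\gamma}$. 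Applying Lemma \ref{lemma3} to each of these four fractional powers replaces $\|A^{\alpha}\|$ by $\sqrt{1+2\sin^2\alpha\gamma}\,w(A^{\alpha})$ and similarly for the others, where the radicand uses $\alpha\gamma$ in the first factor and $(1-\alpha)\gamma$ in the second. Factoring the common radical constants out of each parenthesized sum yields exactly the stated bound.

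For the second inequality I would repeat the argument starting instead from part (ii) of Proposition \ref{prop2}, namely
\begin{eqnarray*}
	\|A+B\| \leq (\|A^{\alpha}\|+\|B^{1-\alpha}\|)(\|A^{1-\alpha}\|+\|B^{\alpha}\|).
\end{eqnarray*}
Here the sectorial bookkeeping is the crucial point: the first factor mixes $A^{\alpha}\in\Pi_{\alpha\gamma}$ with $B^{1-\alpha}\in\Pi_{(1-\alpha)\gamma}$, so the two radical constants are \emph{different} within a single parenthesis and cannot be factored out. Consequently I apply Lemma \ref{lemma3} term by term, writing $\|A^{\alpha}\|\leq\sqrt{1+2\sin^2\alpha\gamma}\,w(A^{\alpha})$ and $\|B^{1-\alpha}\|\leq\sqrt{1+2\sin^2(1-\alpha)\gamma}\,w(B^{1-\alpha})$, and likewise for the second factor, then substitute directly. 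This reproduces the second displayed inequality verbatim.

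The argument is essentially a two-step substitution (Proposition \ref{prop2} followed by Lemma \ref{lemma3}), and the only genuine subtlety — the part I expect to require the most care — is the index tracking through \eqref{tsec}: one must correctly assign the half-angle $\alpha\gamma$ versus $(1-\alpha)\gamma$ to each power and recognize that in the second inequality the radical constants do not combine, which is precisely why the two inequalities take visibly different forms. No monotonicity or optimization over $\alpha$ is needed since the bound is asserted for all $\alpha\in(0,1)$; each fixed $\alpha$ is handled independently. I would also note that $\Pi_{\alpha\gamma},\Pi_{(1-\alpha)\gamma}\subseteq\Pi_{\gamma}$ guarantees Lemma \ref{lemma3} is applicable with the sharper sector-specific constants rather than the coarser $\gamma$ itself.
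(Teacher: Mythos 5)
Your proof is correct and follows exactly the route the paper intends: combine $w(A+B)\leq\|A+B\|$ with Proposition \ref{prop2}, then convert each norm factor via \eqref{tsec} and Lemma \ref{lemma3}, keeping the constants $\sqrt{1+2\sin^2\alpha\gamma}$ and $\sqrt{1+2\sin^2(1-\alpha)\gamma}$ attached to the correct fractional powers. The index bookkeeping you highlight (including why the constants factor out in the first bound but not the second) is precisely the content of the corollary, and the hypotheses of Proposition \ref{prop2} are satisfied since sectorial matrices are accretive and hence have no eigenvalues in $(-\infty,0]$.
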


Next,  we obtain the numerical radius inequalities of the sum and the product of double commuting sectorial matrices.  Recall that a set of matrices $\{A_i : i=1,2, \ldots,n \} \subseteq \mathcal{M}_n$ is called double commuting if $A_iA_j=A_jA_i$ and $A_iA_j^*=A_j^*A_i$ for all $i\neq j.$  First, we note the following lemma, proved in \cite{naja_LAA_2020}.

\begin{lemma}\label{lemma4}  
	Let $\{T_i, S_i : i =1,2,\dots, n \} \subseteq \mathcal{M}_n$  be a set of double commuting matrices.  Then
	\begin{eqnarray*}
	      w\left(\sum_{i=1}^{n}A_iB_i\right) \leq \frac{1}{2}\left\|\sum_{i=1}^{n}A^*_iA_i+A_iA^*_i\right\|^{1/2}\left\|\sum_{i=1}^{n}B^*_iB_i+B_iB^*_i\right\|^{1/2}.
	\end{eqnarray*}
\end{lemma}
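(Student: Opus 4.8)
The plan is to reduce the inequality to a pointwise estimate on unit vectors and then take a supremum, using the definition $w(T)=\sup_{\|x\|=1}|\langle Tx,x\rangle|$. Write $T=\sum_{i=1}^nA_iB_i$ and fix a unit vector $x\in\mathbb{C}^n$. The crucial consequence of the double commuting hypothesis is that $A_iB_i=B_iA_i$ for every $i$, so $\langle Tx,x\rangle$ admits two factorizations, namely $\sum_i\langle B_ix,A_i^*x\rangle$ and $\sum_i\langle A_ix,B_i^*x\rangle$. I would apply the Cauchy--Schwarz inequality in the direct sum $\bigoplus_{i=1}^n\mathbb{C}^n$, pairing $(B_1x,\dots,B_nx)$ with $(A_1^*x,\dots,A_n^*x)$ in the first case and $(A_1x,\dots,A_nx)$ with $(B_1^*x,\dots,B_n^*x)$ in the second, to obtain
\begin{eqnarray*}
	|\langle Tx,x\rangle|^2\le P R \qquad\text{and}\qquad |\langle Tx,x\rangle|^2\le Q S,
\end{eqnarray*}
where $P=\sum_i\|A_i^*x\|^2$, $Q=\sum_i\|A_ix\|^2$, $R=\sum_i\|B_ix\|^2$ and $S=\sum_i\|B_i^*x\|^2$.

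Next I would combine the two one-sided bounds. Multiplying them gives $|\langle Tx,x\rangle|^4\le PQRS$, hence $|\langle Tx,x\rangle|^2\le\sqrt{PQ}\,\sqrt{RS}$, and the arithmetic--geometric mean inequality applied to each factor, $\sqrt{PQ}\le\tfrac12(P+Q)$ and $\sqrt{RS}\le\tfrac12(R+S)$, yields
\begin{eqnarray*}
	|\langle Tx,x\rangle|^2\le\tfrac14(P+Q)(R+S).
\end{eqnarray*}
Rewriting the two factors as quadratic forms, $P+Q=\big\langle\big(\sum_i(A_i^*A_i+A_iA_i^*)\big)x,x\big\rangle$ and $R+S=\big\langle\big(\sum_i(B_i^*B_i+B_iB_i^*)\big)x,x\big\rangle$, and bounding each by the operator norm of the corresponding positive semidefinite matrix, I would arrive at $|\langle Tx,x\rangle|^2\le\tfrac14\|\sum_i(A_i^*A_i+A_iA_i^*)\|\,\|\sum_i(B_i^*B_i+B_iB_i^*)\|$. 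Taking the supremum over all unit vectors $x$ and then a square root gives exactly the asserted bound.

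The step I expect to demand the most care is the passage from the two Cauchy--Schwarz estimates to the symmetric constant $\tfrac14$. It is tempting to \emph{add} the bounds, but $PR+QS\le\tfrac12(P+Q)(R+S)$ is false in general, since it rearranges to $(P-Q)(R-S)\le0$; the correct device is to \emph{multiply} the two estimates and extract a square root before invoking the AM--GM inequality. I would also take care that the Cauchy--Schwarz step is the genuine inner-product inequality on the direct sum space, so that no extraneous factor of $n$ creeps in. Finally, it is worth recording that the hypothesis is used only to produce the second factorization $A_iB_i=B_iA_i$; the adjoint and cross-index commutation relations built into ``double commuting'' play no role in this particular argument.
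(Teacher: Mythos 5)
Your argument is correct, but note that the paper does not actually prove this lemma: it is imported verbatim from Najafi \cite{naja_LAA_2020}, so there is no in-paper proof to compare against. Judged on its own, your route works: the two factorizations $\langle A_iB_ix,x\rangle=\langle B_ix,A_i^*x\rangle$ and $\langle B_iA_ix,x\rangle=\langle A_ix,B_i^*x\rangle$, each fed into the Cauchy--Schwarz inequality on $\bigoplus_{i=1}^n\mathbb{C}^n$, give $|\langle Tx,x\rangle|^2\le PR$ and $|\langle Tx,x\rangle|^2\le QS$; multiplying, taking a square root, and applying AM--GM to $\sqrt{PQ}$ and $\sqrt{RS}$ yields the constant $\tfrac14$, and the identification $P+Q=\langle(\sum_i A_i^*A_i+A_iA_i^*)x,x\rangle$ (similarly for $R+S$) finishes the estimate. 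Your caution about the symmetrization step is well placed --- adding the two one-sided bounds would indeed fail --- and your closing observation is worth keeping: the proof uses only $A_iB_i=B_iA_i$ for each $i$, so the adjoint and cross-index relations built into ``double commuting'' are superfluous for this particular inequality, i.e.\ your argument establishes the lemma under a strictly weaker hypothesis than the one stated. (The mismatch between the $\{T_i,S_i\}$ of the hypothesis and the $A_i,B_i$ of the displayed inequality is a typo in the paper, which you have resolved in the only sensible way.)
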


Applying the above lemma we prove the following theorem. 

\begin{theorem}\label{th3}
	Let $\{T_i, S_i : i =1,2,\dots,n \}\subseteq \mathcal{M}_n$  be a set of double commuting sectorial matrices.  Then
	\begin{eqnarray*}
		 w\left(\sum_{i=1}^{n}A_iB_i\right) \leq (1+sin^2\gamma)\left(\sum_{i=1}^{n}w^2(A_i)\right)^{1/2}\left(\sum_{i=1}^{n}w^2(B_i)\right)^{1/2}.
	\end{eqnarray*}
\end{theorem}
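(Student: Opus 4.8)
The plan is to invoke Lemma \ref{lemma4} to convert the problem into a norm estimate, and then to control $\left\|\sum_{i=1}^n (A_i^*A_i + A_iA_i^*)\right\|$ using the sectoriality of each $A_i$ (and likewise for the $B_i$). Since the matrices are double commuting, Lemma \ref{lemma4} gives directly
\[
w\left(\sum_{i=1}^n A_iB_i\right) \leq \frac{1}{2}\left\|\sum_{i=1}^n (A_i^*A_i + A_iA_i^*)\right\|^{1/2}\left\|\sum_{i=1}^n (B_i^*B_i + B_iB_i^*)\right\|^{1/2},
\]
so it remains to prove the single estimate $\left\|\sum_{i=1}^n (A_i^*A_i + A_iA_i^*)\right\| \leq 2(1+\sin^2\gamma)\sum_{i=1}^n w^2(A_i)$, together with its analogue for the $B_i$.

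For that estimate I would first record the Cartesian identity $A^*A + AA^* = 2(\Re(A)^2 + \Im(A)^2)$, a direct computation from $\Re(A)=(A+A^*)/2$ and $\Im(A)=(A-A^*)/(2i)$. Applying it to each summand and then using the triangle inequality, together with $\|\Re(A_i)^2\|=\|\Re(A_i)\|^2$ and $\|\Im(A_i)^2\|=\|\Im(A_i)\|^2$ (both $\Re(A_i)$ and $\Im(A_i)$ being Hermitian), yields
\[
\left\|\sum_{i=1}^n (A_i^*A_i + A_iA_i^*)\right\| \leq 2\sum_{i=1}^n \left(\|\Re(A_i)\|^2 + \|\Im(A_i)\|^2\right).
\]
Now the two sectorial bounds finish the job termwise: the inequality \eqref{th1eq1} gives $\|\Re(A_i)\|^2 \le w^2(A_i)$, and Lemma \ref{lemma1} gives $\|\Im(A_i)\|^2 \le \sin^2\gamma\, w^2(A_i)$, so each bracket is at most $(1+\sin^2\gamma)w^2(A_i)$, and summing produces the factor $2(1+\sin^2\gamma)$.

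Finally I would substitute both norm bounds into the Lemma \ref{lemma4} estimate; the two factors of $2(1+\sin^2\gamma)$ under the square roots combine with the prefactor $1/2$ to leave exactly $(1+\sin^2\gamma)\left(\sum_i w^2(A_i)\right)^{1/2}\left(\sum_i w^2(B_i)\right)^{1/2}$, which is the assertion. The argument is short because the analytic content is already packaged in Lemma \ref{lemma4}, Lemma \ref{lemma1}, and \eqref{th1eq1}; the only genuine decision is to pull the sum out with the triangle inequality \emph{before} applying the per-matrix sectorial bounds, rather than attempting to estimate the aggregated operator $\sum_i(A_i^*A_i + A_iA_i^*)$ directly. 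That reduction to the Hermitian squares $\Re(A_i)^2$ and $\Im(A_i)^2$ is the step I expect to be the main (and only) obstacle, since without it there is no clean way to relate the norm of the sum to the individual numerical radii $w(A_i)$.
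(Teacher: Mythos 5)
Your argument is correct and follows the paper's proof essentially verbatim: both apply Lemma \ref{lemma4}, rewrite $A^*A+AA^*=2(\Re(A)^2+\Im(A)^2)$, pull the sum apart with the triangle inequality, and then bound $\|\Re(A_i)\|\le w(A_i)$ and $\|\Im(A_i)\|\le (\sin\gamma)\,w(A_i)$ termwise via Lemma \ref{lemma1}. No discrepancies to report.
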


\begin{proof}
 From  Lemma \ref{lemma4} we get,
	\begin{eqnarray*}
		 && w^2\left(\sum_{i=1}^{n}A_iB_i\right) \\
		 &&\leq \left\|\sum_{i=1}^{n}(\Re^2(A_i)+\Im^2(A_i))\right\|\left\|\sum_{i=1}^{n}(\Re^2(B_i)+\Im^2(B_i))\right\|\\
		 && \leq \sum_{i=1}^{n}(\|\Re(A_i)\|^2+\|\Im(A_i)\|^2)\sum_{i=1}^{n}(\|\Re(B_i)\|^2+\|\Im(B_i)\|^2)\\
		 && \leq (1+sin^2\gamma)^2\left(\sum_{i=1}^{n}w^2(A_i)\right)\left(\sum_{i=1}^{n}w^2(B_i)\right) \,\,\,~~\mbox{\big(\textit{by Lemma \ref{lemma1}}\big)}.
	\end{eqnarray*} 
Therefore, we get the required result.
\end{proof}

From Theorem \ref{th3} (for $n=1$) we obtain the following corollary.

\begin{cor}\label{th3cor1}
	Let $A,B \in\Pi_{\gamma}$ be such that $AB=BA$ and $AB^*=B^*A.$ Then
	\begin{eqnarray*}
		w(AB) \leq (1+sin^2\gamma)w(A)w(B).
	\end{eqnarray*}
\end{cor}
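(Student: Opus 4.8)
The plan is to derive this as the $n=1$ specialization of Theorem \ref{th3}, since the hypotheses $AB=BA$ and $AB^*=B^*A$ are precisely the double commuting condition for the two-element family $\{A,B\}$. First I would note that Theorem \ref{th3} with $n=1$ gives immediately
\begin{eqnarray*}
w(AB) \leq (1+\sin^2\gamma)\,\bigl(w^2(A)\bigr)^{1/2}\bigl(w^2(B)\bigr)^{1/2} = (1+\sin^2\gamma)\,w(A)w(B),
\end{eqnarray*}
which is exactly the asserted inequality. Thus the corollary requires no fresh argument beyond verifying that the single-matrix case of the double commuting condition collapses to the stated commutation relations.

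For completeness, I would also sketch the self-contained derivation that underlies Theorem \ref{th3} in this case, to make the logical dependence transparent. Starting from Lemma \ref{lemma4} with $n=1$, one has
\begin{eqnarray*}
w^2(AB) \leq \tfrac{1}{4}\|A^*A+AA^*\|\,\|B^*B+BB^*\|.
\end{eqnarray*}
Then I would rewrite each factor via the Cartesian decomposition: since $A^*A+AA^* = 2(\Re^2(A)+\Im^2(A))$, the first factor equals $\tfrac{1}{2}\|\Re^2(A)+\Im^2(A)\|$, and by the triangle inequality this is bounded by $\tfrac{1}{2}(\|\Re(A)\|^2+\|\Im(A)\|^2)$, and similarly for $B$. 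Applying Lemma \ref{lemma1}, which gives $\|\Im(A)\|\leq (\sin\gamma)w(A)$, together with the elementary bound $\|\Re(A)\|\leq w(A)$ (valid since $\Re(A)$ is Hermitian), one obtains $\|\Re(A)\|^2+\|\Im(A)\|^2 \leq (1+\sin^2\gamma)w^2(A)$, and likewise for $B$.

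Chaining these estimates yields $w^2(AB) \leq (1+\sin^2\gamma)^2 w^2(A)w^2(B)$, and taking square roots gives the result. I do not anticipate any genuine obstacle here: the only point needing care is the bookkeeping that the double commuting hypothesis for a singleton pair $\{A,B\}$ means exactly $AB=BA$ together with $AB^*=B^*A$ (the conditions $AA^*=A^*A$ and $BB^*=B^*B$ are vacuous in the definition since they concern distinct indices $i\neq j$), so that Lemma \ref{lemma4} does apply. The rest is a direct substitution into Theorem \ref{th3}.
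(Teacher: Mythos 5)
Your proposal is correct and matches the paper exactly: the paper obtains this corollary precisely as the $n=1$ case of Theorem \ref{th3}, and your supplementary sketch of the underlying argument (Lemma \ref{lemma4}, the identity $AA^*+A^*A=2(\Re^2(A)+\Im^2(A))$, the triangle inequality, and Lemma \ref{lemma1}) reproduces the paper's proof of that theorem. No gaps.
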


\begin{remark}
	Let  $A,B \in\Pi_{\gamma}$ be such that $AB=BA$ and $AB^*=B^*A.$ Then
	it is clear from Corollary \ref{th3cor1} that the upper bound of $w(AB)$ depends on $\gamma$, and interpolates between $w(A)w(B)$ and $2w(A)w(B).$ If $A,B$ are positive definite (i.e., $A,B>0$), then $w(AB) \leq w(A)w(B)$ and if $\gamma \to \frac{\pi^-}{2}$, then $w(AB) \leq 2w(A)w(B).$
\end{remark}

 

\section{\text{On special sectorial matrices}}

In this section, we obtain bounds for the numerical radius for a particular class of sectorial matrices. Let $A\in \mathcal{M}_n$ be a sectorial matrix with $\Im(A)<0.$ Then, it is clear from the Cartesian decomposition of $A$ that $iA$ is also a sectorial matrix. Moreover, $iA$ is accretive-dissipative. Conversely, if $A$ and $iA$ are both sectorial matrices, then $\Re(A)>0$ and $\Im(A)<0.$ 

Let  $A\in \mathcal{M}_n$ be such that 
\begin{eqnarray}\label{eq1}
W(A)\subseteq\{re^{-i\theta}~:~\theta_1\leq \theta\leq \theta_2\},
\end{eqnarray}
where $r>0$ and $\theta_1,\theta_2\in(0,\pi/2).$
Then, $A$ and $iA$ are both sectorial matrices with sectorial index $\theta_2$ and $\pi/2-\theta_1,$ respectively. Conversely, it is easy to observe that when $A$ and $iA$ are both sectorial then there exists $r>0$ and $\theta_1,\theta_2\in(0,\pi/2)$ such that $W(A)\subseteq\{re^{-i\theta}~:~\theta_1\leq \theta\leq \theta_2\}.$

\begin{theorem}\label{th4}
	Let $A\in \mathcal{M}_n$  satisfies the property \eqref{eq1}. Then 
	\begin{eqnarray*}
			\|A\| \leq \sqrt{1+cos^2\theta_1} \,w(A) \leq \sqrt{2}w(A).
	\end{eqnarray*}
\end{theorem}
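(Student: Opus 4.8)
The plan is to reduce the statement to the accretive-dissipative case of Lemma \ref{lemma3} by rotating $A$ through multiplication by $i$. From the discussion immediately preceding the theorem, the hypothesis \eqref{eq1} forces $\Re(A)>0$ and $\Im(A)<0$, so $iA$ is accretive-dissipative. Moreover $W(iA)=\{re^{i(\pi/2-\theta)}:\theta_1\leq\theta\leq\theta_2\}$, whose arguments $\pi/2-\theta$ all lie in $(0,\pi/2)$ and attain their maximum at $\pi/2-\theta_1$; hence $iA\in\Pi_{\pi/2-\theta_1}$ and is accretive-dissipative.

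Next I would apply the accretive-dissipative bound of Lemma \ref{lemma3} to the matrix $iA$ with sectorial index $\pi/2-\theta_1$, which gives
\[
\|iA\|\leq\sqrt{1+\sin^2(\pi/2-\theta_1)}\,w(iA).
\]
Invoking the rotation invariance of both norms, namely $\|iA\|=\|A\|$ and $w(iA)=w(A)$, together with the identity $\sin(\pi/2-\theta_1)=\cos\theta_1$, this rearranges to $\|A\|\leq\sqrt{1+\cos^2\theta_1}\,w(A)$, which is exactly the first claimed inequality.

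The second inequality is then immediate: since $\theta_1\in(0,\pi/2)$ we have $\cos^2\theta_1\leq 1$, whence $\sqrt{1+\cos^2\theta_1}\leq\sqrt{2}$.

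The only delicate point, and the step I would be most careful about, is the correct identification of the sectorial index of $iA$ as $\pi/2-\theta_1$ rather than $\pi/2-\theta_2$: the half-angle of the enclosing cone is governed by the \emph{largest} argument occurring in $W(iA)$, and since the map $\theta\mapsto\pi/2-\theta$ is decreasing, this largest argument corresponds to the \emph{smallest} angle $\theta_1$. Once this is settled, the remainder is a direct substitution into an already-established lemma, so I anticipate no genuine computational obstacle.
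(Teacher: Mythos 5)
Your argument is correct and is essentially identical to the paper's proof: both observe that $iA$ is accretive-dissipative with $iA\in\Pi_{\pi/2-\theta_1}$ and then apply the accretive-dissipative case of Lemma \ref{lemma3}. Your added care about why the sectorial index is $\pi/2-\theta_1$ (rather than $\pi/2-\theta_2$) and the unitary invariance $\|iA\|=\|A\|$, $w(iA)=w(A)$ just makes explicit what the paper leaves implicit.
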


\begin{proof}
	We only prove the first inequality, as second inequality follows trivially.
Since $iA$ is accretive-dissipative, and  $iA \in \Pi_{\pi/2-\theta_1},$ by  using Lemma \ref{lemma3} we have,
    \begin{eqnarray}\label{th4eq2}
	    \|A\| \leq \sqrt{1+cos^2\theta_1}w(A).
    \end{eqnarray}

\end{proof}

We note that when $A\in \mathcal{M}_n$ satisfies the property \eqref{eq1}, then there exists $\theta\in [0,\pi/2]$ such that $e^{i\theta}A\in \Pi_{(\theta_2-\theta_1)/2}$, and  by Lemma \ref{lemma3} we infer that
	\begin{eqnarray}\label{th4eq1}
		\|A\| \leq \sqrt{1+2sin^2(\theta_2-\theta_1)/2}\,w(A).
	\end{eqnarray}


To prove the next theorem we need the following lemma, proved in \cite{kitt_LAA_2022}.

\begin{lemma}\label{lemma7} 
	Let $A,B \in \Pi_{\gamma}.$ Then 
	\begin{eqnarray*}
		w(AB) \leq (1+2sin^2\gamma) w(A)w(B).
	\end{eqnarray*} 
	Moreover, if $A,B \in \Pi_{\gamma}$ be accretive-dissipative, then
	\begin{eqnarray*}
		w(AB) \leq (1+sin^2\gamma) w(A)w(B).
	\end{eqnarray*}  
\end{lemma}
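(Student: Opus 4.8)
The plan is to bound $w(AB)$ from above by the product of the operator norms of the two factors and then convert each operator norm back into a numerical radius using Lemma~\ref{lemma3}. Concretely, I would begin from the universal chain
\[
w(AB) \leq \|AB\| \leq \|A\|\,\|B\|,
\]
in which the first inequality is the upper bound in \eqref{eqv} applied to the matrix $AB$, and the second is submultiplicativity of the operator norm. Neither of these two steps uses the sectorial hypothesis.

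The sectorial assumption enters only at the next step. Since $A,B\in\Pi_{\gamma}$, Lemma~\ref{lemma3} supplies $\|A\|\leq\sqrt{1+2sin^2\gamma}\,w(A)$ and, likewise, $\|B\|\leq\sqrt{1+2sin^2\gamma}\,w(B)$. Substituting both into the chain above and collecting the two square roots gives
\[
w(AB) \leq \sqrt{1+2sin^2\gamma}\,w(A)\cdot\sqrt{1+2sin^2\gamma}\,w(B) = (1+2sin^2\gamma)\,w(A)w(B),
\]
which is exactly the first asserted inequality.

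For the accretive-dissipative refinement the only modification is to invoke the second conclusion of Lemma~\ref{lemma3} in place of the first: when $A$ and $B$ are both accretive-dissipative one has $\|A\|\leq\sqrt{1+sin^2\gamma}\,w(A)$ and $\|B\|\leq\sqrt{1+sin^2\gamma}\,w(B)$. Repeating the identical multiplication then yields $w(AB)\leq(1+sin^2\gamma)\,w(A)w(B)$.

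I do not expect a genuine obstacle here: the entire argument is a two-line consequence of submultiplicativity together with the norm-versus-radius comparison already recorded in Lemma~\ref{lemma3}. The only point deserving a moment's attention is that the sharper, accretive-dissipative bound requires \emph{both} factors $A$ and $B$ to be accretive-dissipative, since Lemma~\ref{lemma3}'s refined estimate is applied to each factor separately.
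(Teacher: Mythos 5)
Your argument is correct: the chain $w(AB)\leq\|AB\|\leq\|A\|\,\|B\|$ followed by Lemma~\ref{lemma3} applied to each factor gives exactly the stated constants, and the accretive-dissipative refinement follows the same way from the second part of Lemma~\ref{lemma3}. Note that the paper itself offers no proof of this lemma --- it is quoted from \cite{kitt_LAA_2022} --- but your derivation is the natural one and is essentially how the result is obtained there, so there is nothing to add.
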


\begin{theorem}\label{th5}
Let $A,B\in \mathcal{M}_n$ which satisfy the property \eqref{eq1}. Then 
	\begin{eqnarray*}
		w(AB) \leq (1+cos^2\theta_1) w(A)w(B) \leq 2w(A)w(B).
	\end{eqnarray*}
\end{theorem}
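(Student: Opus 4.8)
The plan is to reduce this statement to the sharper, accretive-dissipative branch of Lemma \ref{lemma7} by rotating both matrices by $i$. First I would recall the observation recorded just before Theorem \ref{th4}: when $A$ satisfies property \eqref{eq1}, its numerical range lies in the lower half-plane, so $\Im(A)<0$, and consequently $iA$ is an accretive-dissipative sectorial matrix with $iA\in\Pi_{\pi/2-\theta_1}$. The same applies to $B$. Hence both $iA$ and $iB$ lie in $\Pi_{\gamma}$ with $\gamma=\pi/2-\theta_1$ and are accretive-dissipative, which is exactly the hypothesis needed to invoke the second (sharper) inequality in Lemma \ref{lemma7}.

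Next I would apply that inequality to the pair $iA,iB$, obtaining
\[
	w\big((iA)(iB)\big)\leq \big(1+\sin^2(\pi/2-\theta_1)\big)\,w(iA)\,w(iB).
\]
The remaining steps are the standard invariance identities for the numerical radius. Since multiplication by a unimodular scalar leaves $w(\cdot)$ unchanged, $w(iA)=w(A)$ and $w(iB)=w(B)$; and since $(iA)(iB)=i^2AB=-AB$, we have $w\big((iA)(iB)\big)=w(-AB)=w(AB)$. Substituting $\sin^2(\pi/2-\theta_1)=\cos^2\theta_1$ then yields
\[
	w(AB)\leq \big(1+\cos^2\theta_1\big)\,w(A)\,w(B),
\]
which is the first asserted inequality.

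For the second inequality I would simply note that $\theta_1\in(0,\pi/2)$ forces $\cos^2\theta_1\leq 1$, so $1+\cos^2\theta_1\leq 2$ and therefore $w(AB)\leq 2w(A)w(B)$. I do not expect a genuine obstacle here; the one point that must be checked with care is that property \eqref{eq1} really does place \emph{both} $iA$ and $iB$ in the accretive-dissipative class with sectorial index exactly $\pi/2-\theta_1$, so that the sharp branch $(1+\sin^2\gamma)$ of Lemma \ref{lemma7} applies rather than the weaker $(1+2\sin^2\gamma)$ branch. Everything else reduces to the scalar invariance $w(e^{i\phi}T)=w(T)$ and the elementary bound $\cos^2\theta_1\leq 1$.
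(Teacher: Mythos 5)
Your proposal is correct and follows essentially the same route as the paper: both pass to $iA, iB$, observe that these are accretive-dissipative matrices in $\Pi_{\pi/2-\theta_1}$, and invoke the sharper branch of Lemma \ref{lemma7}. You merely make explicit the routine invariance facts $w(iA)=w(A)$ and $w((iA)(iB))=w(AB)$ that the paper leaves implicit.
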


\begin{proof}
	Clearly, $iA$ and $iB$ are accretive-dissipative, and  $iA,iB \in \Pi_{\pi/2-\theta_1}.$ Then  from Lemma \ref{lemma7} we get,
	\begin{eqnarray}\label{th5eq2}
			w(AB) \leq (1+cos^2\theta_1) w(A)w(B).
	\end{eqnarray}
 The second inequality follows easily.
\end{proof}

	We also note that when  $A,B\in \mathcal{M}_n$ satisfy the property \eqref{eq1}, then there exists $\theta\in [0,\pi/2]$ such that $e^{i\theta}A, e^{i\theta}B\in \Pi_{(\theta_2-\theta_1)/2}.$ Then by Lemma \ref{lemma7}, we have
	
	\begin{eqnarray}\label{th5eq1}
			w(AB) \leq (1+2sin^2(\theta_2-\theta_1)/2) w(A)w(B).
	\end{eqnarray}


\vspace{.2cm}

Now, we observe that when $A\in \mathcal{M}_n$ satisfies the property (\ref{eq1}), then both $A,iA \in \Pi_{\gamma_1},$ where $\gamma_1=\max\{\theta_2,\pi/2-\theta_1\}.$ 
From this observation we prove the following theorem. First we need the following lemma,
 see  \cite[Th. 3.5]{kitt_LAA_2022}.

\begin{lemma}\label{lemma8} 
	Let $A \in \Pi_{\gamma}\, ( \gamma \neq 0).$ Then 
	\begin{eqnarray*}
		w(A) \geq  \frac{csc\gamma}{2}\|A\|
		+ \frac{csc\gamma}{2}\left( \|\Im(A)\|-\|\Re(A)\|\right).
	\end{eqnarray*}  
\end{lemma}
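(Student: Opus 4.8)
The plan is to obtain this one-sided (unsquared) bound by combining Lemma \ref{lemma1} with the triangle inequality for the operator norm, in direct analogy with the opening of the proof of Theorem \ref{th1}. The two ingredients I would write down first are: the rearrangement of Lemma \ref{lemma1}, namely $csc\gamma\,\|\Im(A)\|\leq w(A)$, and the standard norm estimate coming from the Cartesian decomposition $A=\Re(A)+i\,\Im(A)$. The latter gives both $w(A)\geq\|\Re(A)\|$ (exactly as in \eqref{th1eq1}, in unsquared form) and, via $\|i\,\Im(A)\|=\|\Im(A)\|$, the triangle inequality $\|A\|\leq\|\Re(A)\|+\|\Im(A)\|$. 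It is this last inequality that does the real work.

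The key step is to bound the claimed right-hand side from above rather than to bound $w(A)$ from below. The only term with the ``wrong'' sign is $\tfrac{csc\gamma}{2}\|A\|$, so I would replace $\|A\|$ by its upper bound $\|\Re(A)\|+\|\Im(A)\|$ and watch the two copies of $\|\Re(A)\|$ cancel:
\[
\frac{csc\gamma}{2}\|A\| + \frac{csc\gamma}{2}\big(\|\Im(A)\|-\|\Re(A)\|\big)
\leq \frac{csc\gamma}{2}\big(\|\Re(A)\|+\|\Im(A)\|\big) + \frac{csc\gamma}{2}\big(\|\Im(A)\|-\|\Re(A)\|\big)
= csc\gamma\,\|\Im(A)\|.
\]
The final step is then immediate: $csc\gamma\,\|\Im(A)\|\leq w(A)$ is precisely Lemma \ref{lemma1}, so chaining the two estimates yields $w(A)\geq\tfrac{csc\gamma}{2}\|A\|+\tfrac{csc\gamma}{2}(\|\Im(A)\|-\|\Re(A)\|)$, as desired.

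I do not expect a genuine obstacle here: the whole content is carried by Lemma \ref{lemma1}, and in this route the bound $w(A)\geq\|\Re(A)\|$ is not even needed. The only thing requiring minimal care is the direction of the inequality---since $\|A\|$ enters the target with a positive coefficient, one must substitute the \emph{upper} bound for $\|A\|$, not a lower one. If one preferred to mirror Theorem \ref{th1} verbatim, one could instead start from $w(A)\geq\max\{\|\Re(A)\|,\,csc\gamma\|\Im(A)\|\}$, rewrite the maximum as $\tfrac{1}{2}(p+cq)+\tfrac{1}{2}|p-cq|$ with $p=\|\Re(A)\|,\,q=\|\Im(A)\|,\,c=csc\gamma$, and bound $csc\gamma\|A\|$ via $\|A\|\leq p+q$; but this longer route ultimately only re-invokes $w(A)\geq cq$, so the direct argument above is the cleaner one.
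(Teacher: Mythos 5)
Your proof is correct. Note, however, that the paper does not prove this statement at all: Lemma \ref{lemma8} is imported verbatim from \cite[Th. 3.5]{kitt_LAA_2022}, so there is no in-paper argument to compare against. Your derivation is a clean, self-contained substitute: the chain
$\frac{\csc\gamma}{2}\|A\|+\frac{\csc\gamma}{2}\left(\|\Im(A)\|-\|\Re(A)\|\right)\leq \csc\gamma\,\|\Im(A)\|\leq w(A)$
is valid, using only $\|A\|\leq\|\Re(A)\|+\|\Im(A)\|$ and Lemma \ref{lemma1}, and it incidentally shows that the lemma's lower bound never exceeds $\csc\gamma\,\|\Im(A)\|$, i.e.\ the whole statement is a formal consequence of Lemma \ref{lemma1} together with the triangle inequality. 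Your closing remark is also accurate: the $\max$-splitting route modeled on Theorem \ref{th1} would need extra care (the term $\frac12\|\Re(A)\|$ arising there is too small to absorb into $\frac{\csc\gamma}{2}\|A\|$ since $\csc\gamma>1$), so the direct substitution you chose is the right one.
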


\begin{theorem}\label{th6}
	Let $A\in \mathcal{M}_n$ be satisfies the property (\ref{eq1}). Then 
	\begin{eqnarray}\label{th6eq1}
		w(A) \geq  \frac{csc\gamma_1}{2}\|A\|
		+ \frac{csc\gamma_1}{2}\left| \|\Im(A)\|-\|\Re(A)\|\right|,
	\end{eqnarray}
	 where $\gamma_1=\max\{\theta_2,\pi/2-\theta_1\}.$
\end{theorem}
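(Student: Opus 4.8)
The plan is to exploit the observation recorded just before the theorem, namely that when $A$ satisfies \eqref{eq1} both $A$ and $iA$ lie in $\Pi_{\gamma_1}$ with $\gamma_1=\max\{\theta_2,\pi/2-\theta_1\}$. Since Lemma \ref{lemma8} already delivers a lower bound of exactly the desired shape for any matrix in $\Pi_{\gamma_1}$, the strategy is to apply that lemma twice---once to $A$ and once to $iA$---and then combine the two one-sided bounds to recover the absolute value.

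First I would apply Lemma \ref{lemma8} directly to $A\in\Pi_{\gamma_1}$, which gives
\[
	w(A)\ \geq\ \frac{csc\gamma_1}{2}\|A\|+\frac{csc\gamma_1}{2}\left(\|\Im(A)\|-\|\Re(A)\|\right).
\]
This already establishes \eqref{th6eq1} in the case $\|\Im(A)\|\geq\|\Re(A)\|$, since then the parenthesized quantity equals $\left|\,\|\Im(A)\|-\|\Re(A)\|\,\right|$.

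Next I would apply Lemma \ref{lemma8} to $iA\in\Pi_{\gamma_1}$. The only genuine computation is the effect of the factor $i$ on the Cartesian decomposition: using $(iA)^*=-iA^*$ one checks that $\Re(iA)=-\Im(A)$ and $\Im(iA)=\Re(A)$, so that $\|\Re(iA)\|=\|\Im(A)\|$ and $\|\Im(iA)\|=\|\Re(A)\|$. Together with the elementary identities $\|iA\|=\|A\|$ and $w(iA)=w(A)$ (multiplication by a unimodular scalar changes neither the norm nor the numerical radius), Lemma \ref{lemma8} applied to $iA$ then reads
\[
	w(A)\ \geq\ \frac{csc\gamma_1}{2}\|A\|+\frac{csc\gamma_1}{2}\left(\|\Re(A)\|-\|\Im(A)\|\right),
\]
which handles the complementary case $\|\Re(A)\|\geq\|\Im(A)\|$.

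Finally I would simply retain the larger of the two lower bounds. Because $\max\{t,-t\}=|t|$ with $t=\|\Im(A)\|-\|\Re(A)\|$, the maximum of the two parenthesized terms is precisely $\left|\,\|\Im(A)\|-\|\Re(A)\|\,\right|$, and \eqref{th6eq1} follows. I do not anticipate any real obstacle here: the content of the argument is entirely front-loaded into the observation that both $A$ and $iA$ sit in $\Pi_{\gamma_1}$, and everything after that is the routine verification of $\Re(iA)=-\Im(A)$, $\Im(iA)=\Re(A)$ together with the invariance of $\|\cdot\|$ and $w(\cdot)$ under multiplication by $i$.
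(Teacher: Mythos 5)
Your proposal is correct and follows exactly the paper's own argument: the paper likewise applies Lemma \ref{lemma8} to both $A$ and $iA$ (both lying in $\Pi_{\gamma_1}$) and combines the two resulting one-sided bounds to obtain the absolute value. Your explicit verification that $\Re(iA)=-\Im(A)$ and $\Im(iA)=\Re(A)$ is a detail the paper leaves implicit, but the route is identical.
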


\begin{proof}
 Since $A$ and $iA$ are in $\Pi_{\gamma_1},$ by using Lemma \ref{lemma8} we get,
	\begin{eqnarray}\label{th6eq2}
		w(A) \geq  \frac{csc\gamma_1}{2}\|A\|
		+ \frac{csc\gamma_1}{2}\left( \|\Im(A)\|-\|\Re(A)\|\right)
	\end{eqnarray}
   and
    \begin{eqnarray}\label{th6eq3}
    	w(A) \geq  \frac{csc\gamma_1}{2}\|A\|
    	+ \frac{csc\gamma_1}{2}\left(\|\Re(A)\|-\|\Im(A)\|\right).
    \end{eqnarray}
Thus, combining (\ref{th6eq2}) and (\ref{th6eq3}) we get the desired inequality.
\end{proof}

\begin{remark}
	Since $\gamma_1 \in (0,\pi/2),$ $csc\gamma_1>1.$ Therefore, from Theorem \ref{th6} we get,
	\begin{eqnarray*}
		w(A) &\geq & \frac{csc\gamma_1}{2}\|A\|
		+ \frac{csc\gamma_1}{2}\left| \|\Im(A)\|-\|\Re(A)\|\right|\\
		&>& \frac{\|A\|}{2}
		+ \frac{\left| \|\Im(A)\|-\|\Re(A)\|\right|}{2}.
	\end{eqnarray*}
Thus, we would like to remark that 
Theorem \ref{th6} gives sharper bound than the existing bound \cite[Th. 2.1]{bhunia_LAA_2021},  namely,
\begin{eqnarray}\label{1p}
	w(A) &\geq & \frac{\|A\|}{2}
	+ \frac{\left| \|\Im(A)\|-\|\Re(A)\|\right|}{2}.
\end{eqnarray}
\end{remark}

Next refinement reads as:

\begin{theorem}\label{th7}
Let $A\in \mathcal{M}_n$  satisfies the property (\ref{eq1}). Then
	\begin{eqnarray}\label{th7eq1}
	    w^2(A) \geq  \frac{csc^2\gamma_1}{4}\|AA^*+A^*A\|
		+ \frac{csc^2\gamma_1}{2}\left| \|\Im(A)\|^2-\|\Re(A)\|^2\right|,
	\end{eqnarray}
	 where $\gamma_1=\max\{\theta_2,\pi/2-\theta_1\}.$
\end{theorem}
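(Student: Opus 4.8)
The plan is to mirror the proof of Theorem \ref{th6} verbatim, with the linear lower bound of Lemma \ref{lemma8} replaced by the quadratic lower bound of Theorem \ref{th1}. The structural fact I rely on is the one recorded immediately before Theorem \ref{th6}: when $A$ satisfies \eqref{eq1}, both $A$ and $iA$ lie in $\Pi_{\gamma_1}$ with $\gamma_1=\max\{\theta_2,\pi/2-\theta_1\}$. Since $\theta_1,\theta_2\in(0,\pi/2)$ forces $\gamma_1\neq 0$, Theorem \ref{th1} is applicable to each of the two matrices.

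First I would apply Theorem \ref{th1} directly to $A\in\Pi_{\gamma_1}$, which gives
\[
	w^2(A) \geq \frac{csc^2\gamma_1}{4}\|AA^*+A^*A\| + \frac{csc^2\gamma_1}{2}\left(\|\Im(A)\|^2-\|\Re(A)\|^2\right).
\]
Next I would apply Theorem \ref{th1} to $iA\in\Pi_{\gamma_1}$ and translate every quantity back to $A$. The elementary Cartesian identities $\Re(iA)=-\Im(A)$ and $\Im(iA)=\Re(A)$ give $\|\Re(iA)\|=\|\Im(A)\|$ and $\|\Im(iA)\|=\|\Re(A)\|$; moreover $w(iA)=w(A)$ and $(iA)(iA)^*+(iA)^*(iA)=AA^*+A^*A$. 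Substituting these into the conclusion of Theorem \ref{th1} for $iA$ yields the companion inequality
\[
	w^2(A) \geq \frac{csc^2\gamma_1}{4}\|AA^*+A^*A\| + \frac{csc^2\gamma_1}{2}\left(\|\Re(A)\|^2-\|\Im(A)\|^2\right).
\]

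Finally, combining the two displayed inequalities by retaining whichever sign of $\|\Im(A)\|^2-\|\Re(A)\|^2$ is nonnegative replaces the signed difference with its absolute value and produces \eqref{th7eq1}. This is the exact analogue of how \eqref{th6eq2} and \eqref{th6eq3} were merged in Theorem \ref{th6}.

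I expect no genuine obstacle here, as the argument is a routine adaptation of Theorem \ref{th6}. The only point demanding care is the bookkeeping in the second step: one must verify that $\|AA^*+A^*A\|$ is invariant under $A\mapsto iA$ and that passing to $iA$ interchanges the roles of $\|\Re(A)\|$ and $\|\Im(A)\|$. Once these invariances are checked, the conclusion follows immediately upon combining the two bounds.
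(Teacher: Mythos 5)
Your proposal is correct and follows essentially the same route as the paper, whose proof of Theorem \ref{th7} is precisely ``proceed as in Theorem \ref{th6} using Theorem \ref{th1}'' --- i.e., apply Theorem \ref{th1} to both $A$ and $iA$ in $\Pi_{\gamma_1}$ and combine. Your explicit verification of the invariances ($w(iA)=w(A)$, $(iA)(iA)^*+(iA)^*(iA)=AA^*+A^*A$, and the swap of $\|\Re(\cdot)\|$ and $\|\Im(\cdot)\|$) correctly supplies the bookkeeping the paper leaves implicit.
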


\begin{proof}
	Proceeding as Theorem \ref{th6} and using the inequality in Theorem \ref{th1} we obtain the desired inequality.
	
\end{proof}

\begin{remark}
		Since $\gamma_1 \in (0,\pi/2),$ $csc^2\gamma_1>1.$ Hence, from Theorem \ref{th7} we get,
	\begin{eqnarray*}
	 w^2(A) &\geq&  \frac{csc^2\gamma_1}{4}\|AA^*+A^*A\|
	+ \frac{csc^2\gamma_1}{2}\left| \|\Im(A)\|^2-\|\Re(A)\|^2\right|\\
		&>& \frac{\|AA^*+A^*A\|}{4}
		+ \frac{\left| \|\Im(A)\|^2-\|\Re(A)\|^2\right|}{2}.
	\end{eqnarray*}
	Thus, we would like to remark that 
	Theorem \ref{th7} gives sharper bound than that in \cite[Th. 2.9]{bhunia_LAA_2021}, namely,
		\begin{eqnarray} \label{2p}
		w^2(A) &\geq& \frac{\|AA^*+A^*A\|}{4}
		+ \frac{\left| \|\Im(A)\|^2-\|\Re(A)\|^2\right|}{2}.
	\end{eqnarray}
\end{remark}

Finally we note that when $A\in \mathcal{M}_n$ with the numerical range $W(A)$ satisfying  the property
\begin{eqnarray}\label{eq2}
	W(A)\subseteq\{re^{i\theta}~:~\theta_1\leq \theta\leq \theta_2\},
\end{eqnarray}
where $r>0$ and $\theta_1,\theta_2\in(0,\pi/2),$ then $A$ and $-iA $ are both in $ \Pi_{\gamma_1}$, where $\gamma_1=\max \{ \theta_2, {\pi}/2-\theta_1 \}.$ Therefore, Theorems \ref{th6} and \ref{th7} also hold when $A\in \mathcal{M}_n$ satisfies  \eqref{eq2}.
  Further, we  see that Theorems \ref{th6} and \ref{th7} hold when $\theta_1, \theta_2 \in \{ 0, \pi/2\}$, see \eqref{1p} and \eqref{2p}.

\bibliographystyle{amsplain}
	
\end{document}